\theoremstyle{definition}
\newtheorem{thm}{Theorem}
\newtheorem{defn}[thm]{Definition}
\newtheorem{lem}[thm]{Lemma}
\newtheorem{rem}[thm]{Remark}
\newtheorem{cor}[thm]{Corollary}
\newcommand{\ii}{\mathrm{i}}
\newcommand{\bc}{\mathbf c}
\renewcommand{\bf}{\mathbf f}
\newcommand{\bz}{\mathbf z}
\newcommand{\bx}{\mathbf x}
\newcommand{\bn}{\mathbf n}
\newcommand{\bg}{\mathbf g}
\newcommand{\bA}{\mathbf A}
\newcommand{\bC}{\mathbf C}
\newcommand{\bN}{\mathbf N}
\newcommand{\bB}{\mathbf B}
\newcommand{\bS}{\mathbf S}
\newcommand{\bT}{\mathbf T}
\newcommand{\bD}{\mathbf D}
\newcommand{\bI}{\mathbf I}
\newcommand{\bF}{\mathbf F}
\newcommand{\bff}{\mathbf f}
\newcommand{\bG}{\mathbf G}
\newcommand{\bzero}{\mathbf 0}
\newcommand{\bv}{\mathbf v}
\newcommand{\by}{\mathbf y}
\newcommand{\bw}{\mathbf w}
\newcommand{\R}{\mathbb R}
\newcommand{\C}{\mathbb C}
\newcommand{\tm}{\subseteq}
\newcommand{\abs}[1]{\lvert #1\rvert}
\newcommand{\norm}[1]{\lVert #1\rVert}
\newcommand{\be}{\mathbf e}
\newcommand{\qta}{\quad\text{ and }\quad}
\renewcommand{\R}{\mathbb{R}}
\newcommand{\N}{\mathbb{N}}
\newcommand{\bby}{\bar{\by}}
\renewcommand{\Vec}[1]{\renewcommand*{\arraystretch}{1.2}\begin{pmatrix*}[r]#1\end{pmatrix*}}
\renewcommand{\vec}[1]{\begin{psmallmatrix}#1\end{psmallmatrix}}
\newcommand{\from}{\colon}
\DeclareMathOperator{\diag}{diag}
\DeclareMathOperator{\tr}{trace}
\DeclareMathOperator{\im}{Im}
\DeclareMathOperator{\re}{Re}
\DeclareMathOperator{\Span}{span}
\DeclareMathOperator{\rank}{rank}
\DeclareMathOperator{\diff}{d}
\newcommand{\suggest}[1]{{\color{black}{#1}}}
\newcommand{\vast}{\bBigg@{3}}
\newcommand{\Vast}{\bBigg@{4}}
\newcommand{\ns}{\mkern-5mu}
\newif\ifcenter@asb@\center@asb@false
\def\center@arstrutbox{%
    \setbox\@arstrutbox\hbox{$\vcenter{\box\@arstrutbox}$}%
    }
\newcommand*{\CenteredArraystretch}[1]{%
    \ifcenter@asb@\else
      \pretocmd{\@mkpream}{\center@arstrutbox}{}{}%
      \center@asb@true
    \fi
    \renewcommand{\arraystretch}{#1}%
    }
\definecolor{colorA}{rgb}{0,0.447,0.741}
\definecolor{colorB}{rgb}{0.85,0.325,0.098}
\definecolor{colorE}{rgb}{0.929,0.694,0.125}
\definecolor{colorF}{rgb}{0.494,0.184,0.556}
\definecolor{colorD}{rgb}{0.466,0.674,0.188}
\definecolor{colorC}{rgb}{0.301,0.745,0.933}
\definecolor{colorG}{rgb}{0.635,0.078,0.184}
\newcommand{\subalign}[1]{%
	\vcenter{%
		\Let@ \restore@math@cr \default@tag
		\baselineskip\fontdimen10 \scriptfont\tw@
		\advance\baselineskip\fontdimen12 \scriptfont\tw@
		\lineskip\thr@@\fontdimen8 \scriptfont\thr@@
		\lineskiplimit\lineskip
		\ialign{\hfil$\m@th\scriptstyle##$&$\m@th\scriptstyle{}##$\hfil\crcr
			#1\crcr
		}%
	}%
}
\newsavebox{\measure@tikzpicture}
	\def\tikz@width{#1}%
	\def\tikzscale{1}\begin{lrbox}{\measure@tikzpicture}%
	\edef\tikzscale{\pgfmathresult}%
\begin{document}



\title{\suggest{On the Dynamics} of First and Second Order \\ GeCo and gBBKS Schemes}

\author[1]{Thomas Izgin} 
\author[2,3]{Angela Martiradonna}
\author[1]{Stefan Kopecz}
\author[1]{Andreas Meister}
\affil[1]{Department of Mathematics and Natural Sciences, University of Kassel, Germany}
\affil[2]{University of Bari Aldo Moro, Department of Mathematics, Italy}
\affil[3]{Istituto per le Applicazioni del Calcolo M. Picone, CNR, italy}
\affil[1]{izgin@mathematik.uni-kassel.de\ \&\ kopecz@mathematik.uni-kassel.de\ \&\ meister@mathematik.uni-kassel.de}
\affil[2,3]{a.martiradonna@ba.iac.cnr.it}
\setcounter{Maxaffil}{0}
\renewcommand\Affilfont{\itshape\small}
\maketitle
\begin{abstract}
In this paper we investigate the stability properties of the so-called gBBKS and GeCo methods, which belong to the class of nonstandard schemes and preserve the positivity as well as all linear invariants of the underlying system of ordinary differential equations for any step size.
\suggest{A stability investigation for these methods, which are outside the class of general linear methods, is challenging since the iterates are always generated by a nonlinear map even for linear problems. Recently, a stability theorem was derived presenting criteria for understanding such schemes.} 

\suggest{For the analysis, the} schemes are applied to general linear equations and proven to be generated by $\mathcal C^1$-maps with locally Lipschitz continuous first derivatives. As a result, \suggest{the above mentioned} stability theorem can be applied to investigate the Lyapunov stability of non-hyperbolic fixed points of the numerical method by analyzing the spectrum of the corresponding Jacobian of the generating map. In addition, if a fixed point is proven to be stable, the theorem guarantees the local convergence of the iterates towards it.

In the case of first and second order gBBKS schemes the stability domain coincides with that of the underlying Runge--Kutta method. Furthermore, while the first order GeCo scheme converts steady states to stable fixed points for all step sizes and all linear test problems of finite size, the second order GeCo scheme has a bounded stability region for the considered test problems. Finally, all theoretical predictions from the stability analysis are validated numerically.
\end{abstract}


\section{Introduction}
\label{sec:Intro}

Many realistic phenomena in biology, chemistry, epidemiology and ecology are  modeled  by systems of differential equations that are constrained by restrictions linked to the nature of the problem \cite{lacitignola2021using, colonna2016plasma,  kooijman2000dynamic}.  
Such systems are often featured by positive state variables and by the conservation of some linear invariants, such
as the total density. 

A general biochemical system \cite{formaggia2011positivity,BBKS2007} is defined as a system \suggest{of ordinary differential equations}
\begin{equation}\label{eq:bio}
	\by'=\mathbf{f}(\by),  \qquad \by(0)=\suggest{\by^0},
\end{equation}
where $\mathbf{y}=(y_1,\dots,y_N)^\mathsf{T}$ denotes the vector of the state variables and the vector field is given by $\bf(\by)=\bS\mathbf r(\by)$. Here,  $\bS\in\mathbb R^{N\times M}$ is the stoichiometric matrix with entries $s_{ij}$ for $i=1,\dotsc, N$ and $j=1,\dotsc,M$, and $\mathbf{r}(\mathbf{y})=(r_1(\mathbf{y}),\dots,r_M(\mathbf{y}))^\mathsf{T}$ is the vector of the reaction functions. The following assumptions, stated in \cite{formaggia2011positivity}, assure the well-posedness of the system and the positivity of the solutions:
\begin{enumerate}
	\item $r_j\in \mathcal{C}^0 \left(\overline{\mathbb{R}}_+^N           \right)$ is locally Lipschitz in $\mathbb{R}^N$ for $j=1,\dots,M$;
	\item $\mathbf{r}(\mathbf{y})>\mathbf 0$ if $\mathbf{y}>\mathbf 0$ and                $\mathbf{r}(\mathbf{y})=\mathbf 0$ if $\mathbf{y}=\mathbf 0$;
	\item If $s_{ij}<0$, there exists a 
	$q_j\in \mathcal{C}^0 \left(\overline{\mathbb{R}}_+^N \right)$
	such that $r_j(\mathbf{y})=q_j(\mathbf{y})y_i$.
\end{enumerate}

All linear functions $\xi_{\mathbf n}(\mathbf{y})=\mathbf{n}^\mathsf{T}\mathbf{y}$, with  $\mathbf{n}\in \ker(\bS^\mathsf{T})=\left\{ \mathbf n \in \mathbb{R}^N: \mathbf n^\mathsf{T} \bS=\bzero
\right\}$
are first integrals for (\ref{eq:bio}), as $\tfrac{\diff\xi_{\mathbf n}(\by)}{\diff t} = \mathbf n ^\mathsf{T} \tfrac{\diff\mathbf{y}}{\diff t} =  \mathbf n^\mathsf{T}\bS \mathbf{r} (\mathbf{y}) =0$. This means that, if $\rank(\bS)<N$,  the system (\ref{eq:bio}) possesses $k=N-\rank(\bS)$ independent linear invariants $\xi_{\suggest{\bn}_j}=\mathbf{n}_j^\mathsf{T} \by$, $j=1,\dots,k$, where $\{\mathbf{n}_1,\dots,\mathbf{n}_k\}$ is a basis of $\ker(\bS^\mathsf{T})$.  

In the general principle of geometric numerical integration \cite{hairer2006geometric}, numerical schemes for systems of differential equations should provide approximate solutions that are featured by the same geometric properties of the exact flow \cite{diele2019geometric}. In particular, geometric numerical integrators for (\ref{eq:bio}) are required \suggest{to preserve all linear invariants of the system and} to be unconditionally positive, \suggest{i.\,e.\ }the numerical solution is positive \suggest{for any positive} step size.
Experiments suggest that the use of positive and linear invariants preserving numerical integrators is mandatory in order to adequately detect the behavior of the dynamical system and disregarding them can also lead to incorrect equilibrium states \cite{lacitignola2021using,shampine1986conservation}.
Moreover, positive schemes are of practical
interest  whenever the loss of positivity in the approximate solutions induces instability \cite{sandu2001positive,lacitignola2021using}.

While high order linear integrators, namely Runge--Kutta and linear multistep methods, preserve exactly all the linear invariants of the system, unconditional positivity is much harder to obtain. Among the class of linear integrators, unconditional positivity is restricted to first order \cite{MR1963511}. The implicit Euler method indeed grants the positivity, although methods for solving nonlinear systems coming from implicit schemes do not guarantee positive approximations. 
Higher order linear methods can only guarantee positivity by restricting the time step size, leading to
a significant increase in computational time \cite{MR1963511,bertolazzi1996positive}.

Positive and linear invariants preserving  schemes based on projection techniques were proposed  in \cite{sandu2001positive, nusslein2021positivity}, where at each time step, the negative approximations or the weights of the Runge--Kutta method are changed to guarantee positivity while maintaining the order of the method.  More recently, the issue of positivity preservation was addressed in \cite{blanes2021positivity}, where splitting and exponential methods were combined to construct positive and conservative integrators up to third order for \suggest{solving} nonlinear mass conservative system\suggest{s} of the type $\by'(t)=\bA(t,\by(t))\by(t)$, where $\bA(t,\by(t))$ is a $N\times N$ matrix-valued function. 

\suggest{In the context of production-destruction systems (PDS), where  the only linear invariant to be preserved is the total density, modified Patankar--Runge--Kutta (MPRK) methods, originally introduced in \cite{BDM2003}, have been of considerable interest in recent years. Second and third order MPRK schemes have been developed in  \cite{KM18,KM18Order3,MR3987250} and the idea was then carried out in the context of SSP Runge--Kutta methods in \cite{MR3969000,MR3934688}, where they have been applied to solve reactive Euler equations. The so-called Patankar-trick was also used in \cite{BDM2003} to develop MPDeC schemes, which are modified Patankar schemes of arbitrary order based on deferred correction schemes. All these schemes are mass conservative and unconditionally positive. Moreover  their efficiency and robustness was proven numerically while integrating stiff PDS.}

Among the positive and linear invariants preserving integrators for biochemical systems, first and second order  gBBKS \cite{BBKS2007,BRBM2008,MR4109346} and GeCo schemes \cite{martiradonna2020geco} have been introduced in recent literature. 
These schemes fall in the class of  nonstandard integrators \cite{mickens1994nonstandard}, as they result as  nonstandard versions of explicit first and second order Runge--Kutta schemes, where the advancement in time is modulated by a nonlinear functional dependency on the temporal step size and on the  approximation itself. 
The step size modification \suggest{thereby} guarantees the numerical solution to be unconditionally positive while keeping the accuracy of the underlying method. 
While GeCo schemes are explicit integrators, the gBBKS step size modification function leads to an implicit scheme.
Nevertheless, nonlinear implicit equations that arise from gBBKS schemes may be reduced to a  scalar nonlinear equation in one single unknown \suggest{\cite{MR4109346}}.

Previous studies on stability of nonstandard schemes were done in \cite{dimitrov2007stability, dimitrov2008nonstandard, wood2017universal}, where some conditions on the step size modification function were deduced in order to guarantee the elementary stability of first and second order nonstandard Runge--Kutta schemes applied to  autonomous systems with a finite number of hyperbolic equilibria\suggest{. Moreover,}  positive and elementary stable nonstandard methods were constructed for specific systems used to model physical processes and chemical reactions as well as biological interactions and epidemic models. 
The stability of nonstandard schemes applied to systems with infinitely many non-hyperbolic steady state deserves more attention and needs to be better investigated, since already in any linear test problem the presence of linear invariants implies the existence of infinitely many non-hyperbolic fixed points of the method. \suggest{Getting a deeper insight in the stability of steady states of the differential equations or the corresponding fixed points of the method is an important step towards understanding the dynamics of nonlinear equations and methods, respectively. The stability analysis of hyperbolic steady states and fixed points can be reduced to an eigenvalue problem \cite{SH98}, while for the non-hyperbolic setting more assumptions on the underlying differential equations are needed to overcome a case by case study.}

In \cite{IKM2122}, \suggest{such} a  stability analysis for positive and mass conservative time integration schemes  applied to two dimensional linear PDS was developed and  used for exploring the stability properties of second order MPRK schemes. The mass conservation law in PDS leads to the presence of a non-hyperbolic steady state in the system and, consequently, in the  presence of a non-hyperbolic fixed point in the numerical schemes. Therein, the center manifold theory for maps \cite{carr1982,mccracken1976hopf} is  used to derive sufficient conditions for the stability as well as local convergence of $\mathcal C^2$ iteration schemes for PDS.

Then,\suggest{ the stability} Theorem 2.9 in \cite{IKM2122} has been extended in \cite{izgin2022stability} 
to higher dimensional linear systems with some linear invariants, which are characterized by the presence of non-hyperbolic steady states. Based on the center manifold theory, a criterion to assess the stability of a positive and linear invariants preserving scheme $\by^{n+1}=\bg(\by^n)$ is given, provided that $\bg\in \mathcal C^1$ as well as that the first derivatives of $\bg$ are Lipschitz continuous. This criterion was then used to investigate second order MPRK schemes when applied to arbitrary finite sized linear system in \cite{izgin2022stability} as well as for the analysis of higher order SSPMPRK \cite{HIKMS22}, MPRK and MPDeC methods \cite{IOE22StabMP}. The obtained stability functions were used to derive a necessary condition for the time step size to avoid oscillatory behavior of the numerical approximation, see \cite{TORLO,ITOE22} for more insights into this topic. Moreover, the stability theory from \cite{IKM2122, izgin2022stability} was also used to investigate the stability of MPRK22($\alpha$) when applied to a nonlinear test problem, see  \cite{IKMnonlin22}.

In this paper, we use the theory developed in \cite{izgin2022stability} in order to explore the stability properties of gBBKS and GeCo schemes.
\suggest{We explore the stability of the first order GeCo (GeCo1)  scheme in the general $N$-dimensional linear setting
\begin{equation*}
	\by'(t)=\bA\by(t),\qquad \by(0)=\by^0,
\end{equation*}
where $\bA\in\mathbb{R}^{N\times N}$ has $\rank(\bA)<N$ and is a Metzler matrix, i.\,e.\ $a_{ij}\geq 0$ for $i\neq j$. As a result the system  possesses $k=N-\rank(\bA)\geq 1$ linear invariants and its solution is positive whenever $\by^0>0$. Moreover, to ensure stable steady states $\by^*\in \ker(\bA)$,  the matrix $\bA$ must have a spectrum
$\sigma(\bA) \tm \C^- = \{z \in \mathbb \C : \re(z) \leq 0\}$ 
and the eigenvalues of $\bA$ with vanishing real part have to be associated with a Jordan block
size of 1, see \cite[Theorem 3.23]{MR1912409}.}

\suggest{Due to the complex structure of the gBBKS schemes and  the second order GeCo scheme, we} search for the\suggest{ir} stability regions when applied to the two dimensional linear test problem
\label{sec:TestProb.StabTheorem}
\begin{align}
	\by'(t)=\bA\by(t),\quad 
	\by(t)=\left[\begin{array}{c} y_1(t)\\y_2(t)\end{array}\right],
	\quad \bA=\begin{pmatrix*}[r]
		-ac &bc\\ a &-b
	\end{pmatrix*},\quad a,b,c> 0\label{PDS_test}
\end{align}
with initial condition 
\begin{equation}
	\by(0)=\by^0=\vec{y_1^0\\y_2^0}>\bzero.\label{eq:IC}
\end{equation}
The eigenvalues of $\bA$ are given by $0$ and $\lambda=-(ac+b)<0$, and the steady states of \eqref{PDS_test} are given by $\ker(\bA)=\Span(\by^*)$ with $\by^*=(b,a)^\mathsf{T}$.
Such a system provides positive solutions whenever $y_1^0,y_2^0>0$ and possesses $ y_1(t)+cy_2(t)=y_1^0+cy_2^0$ for any $t\geq 0$ as \suggest{the} only linear invariant. \suggest{Moreover, it is known from \cite{IKM21} that all steady states of \eqref{PDS_test} are Lyapunov stable but none of them is asymptotically stable. Therefore, we expect to observe the same stability properties for the corresponding fixed points using a reasonable method.} In order to apply Theorem 2.9 from \cite{izgin2022stability}, we prove that the iterates of the numerical schemes are generated by $\mathcal C^1$-maps with Lipschitz continuous first derivatives when applied to the test problem \eqref{PDS_test}. \suggest{As we will see, already for these $2\times 2$ systems, the first and second order gBBKS as well as the second order GeCo scheme possess only bounded stability domains, disqualifying them for solving stiff problems. }

The paper is organized as follows. In the next section, we shortly present definitions and results for the stability analysis of fixed points. \suggest{In Section \ref{sec:GeCo}, we apply the stability theory from \cite{izgin2022stability} to first and second order GeCo schemes from \cite{martiradonna2020geco}.} Next, in Section \ref{sec:BBKS} we investigate the stability properties of first and second order generalized BBKS schemes from \cite{BBKS2007,BRBM2008, MR4109346}. We \suggest{confirm} the theoretical results \suggest{by} numerical experiments in Section \ref{sec:NumTests}.
Finally, in Section \ref{sec:Summary}, we formulate our summary and conclusions.

\section{Stability Theorem}


We recall in this section the definition and characterization of Lyapunov stability of  fixed points of maps along with  the main theorem of \cite{izgin2022stability}, which provides criteria to assess the stability of non-hyperbolic
fixed points of nonlinear iterations conserving some linear invariant.
In the following, we use $\Vert \cdot \Vert$ to represent an arbitrary norm in $\mathbb R^N$ and $\bD\bg$ denotes the Jacobian of a map $\bg\colon \R^N\to \R^N$.

\begin{defn}\label{Def_Lyapunov_Diskr}
	Let $\by^*$ be a fixed point of an iteration scheme $\by^{n+1}=\bg(\by^n)$, that is $\by^*=\bg(\by^*)$. 
	\begin{enumerate}
		\item\label{def:stab} The fixed point $\by^*$ is called \emph{Lyapunov stable} if, for any $\epsilon>0$, there exists a $\delta=\delta(\epsilon)>0$ such that $\norm{\by^0-\by^*}<\delta$ implies $\norm{\by^n- \by^*}<\epsilon$ for all $n\geq 0$.
		\item If in addition to a), there exists a constant $c>0$ such that $\Vert \by^0-\by^*\Vert<c$ implies $\Vert \by^n-\by^*\Vert \to 0$ for $n\to \infty$, the fixed point $\by^*$ is called \emph{asymptotically stable.}
		\item A fixed point that is not Lyapunov stable is said to be \emph{unstable}.
	\end{enumerate}
\end{defn}
In the following, we will also briefly speak of stability instead of Lyapunov stability. \suggest{According to this definition, the iterates of a method starting close enough to a stable fixed point will stay within an $\epsilon$-neighborhood of the fixed point. Of course, this property is only desirable, if the continuous time problem possesses steady state solutions with identical properties. As already mentioned in the introduction, the steady states of the test problem \eqref{PDS_test} are stable and not asymptotically stable. As we will see, the general linear test problem will share the same features concerning the stability of its steady states.}

To decide if a fixed point $\mathbf y^*$ of a map $\mathbf g$ is stable, we can compute the absolute values of the eigenvalues of the Jacobian $\mathbf D\mathbf g(\mathbf y^*)$. The following theorem characterizes the stability of a fixed point, if all eigenvalues $\lambda$ of $\mathbf D\mathbf g(\mathbf y^*)$ satisfy $\abs\lambda \ne 1$. 
\begin{thm}[{\cite[Theorem 1.3.7]{SH98}}]\label{Thm:_Asym_und_Instabil}
	Let  $\bg\in \mathcal C^1$ and $\by^{n+1}=\bg(\by^n)$ be an iteration scheme with fixed point $\by^*$. Then
	\begin{enumerate}
		\item $\by^*$ is asymptotically stable if $\abs\lambda <1$ for all eigenvalues $\lambda$ of $\bD\bg(\by^*)$. 
		\item $\by^*$ is unstable if $\abs {\lambda}>1$ for at least one eigenvalue $\lambda$ of $\bD\bg(\by^*)$.
	\end{enumerate}
\end{thm}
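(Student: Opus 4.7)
The plan rests on two tools. The first is the Taylor expansion
\[
\bg(\by) - \by^* = \bJ(\by - \by^*) + \bh(\by),
\]
with $\bJ := \bD\bg(\by^*)$ and remainder satisfying $\|\bh(\by)\|/\|\by - \by^*\| \to 0$ as $\by \to \by^*$, available because $\bg \in \mathcal{C}^1$. The second is the classical spectral-radius lemma: for every $\bM \in \R^{N\times N}$ and every $\eta > 0$ there exists a norm $\|\cdot\|_*$ on $\R^N$ whose induced operator norm satisfies $\|\bM\|_* \leq \rho(\bM) + \eta$ (obtained by rescaling a Jordan basis). Since all norms on $\R^N$ are equivalent and neither notion of stability depends on the chosen norm, I would work throughout in such an adapted norm.

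For part~(a), apply the lemma to $\bJ$ with $\eta$ chosen so that $q := \rho(\bJ) + \eta < 1$, then fix $\epsilon \in (0, 1-q)$ and $\delta > 0$ so that $\|\bh(\by)\|_* \leq \epsilon\|\by - \by^*\|_*$ whenever $\|\by - \by^*\|_* < \delta$. The triangle inequality yields the contraction
\[
\|\bg(\by) - \by^*\|_* \leq (q+\epsilon)\|\by - \by^*\|_*
\]
on the $\delta$-ball, which is therefore forward-invariant. Iterating gives $\|\by^n - \by^*\|_* \leq (q+\epsilon)^n \|\by^0 - \by^*\|_*$, so both Lyapunov and asymptotic stability follow at once.

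For part~(b), split $\R^N = E^u \oplus E^c$ into the $\bJ$-invariant generalized eigenspaces corresponding to eigenvalues with $|\lambda| > 1$ and $|\lambda| \leq 1$, with associated projections $P_u, P_c$. Applying the lemma to $(\bJ|_{E^u})^{-1}$ produces an adapted norm and a constant $\mu > 1$ with $\|\bJ\bu\|_* \geq \mu\|\bu\|_*$ for $\bu \in E^u$; applying it to $\bJ|_{E^c}$ produces $\|\bJ\bv\|_* \leq \nu\|\bv\|_*$ for $\bv \in E^c$, with $\nu$ as close to $\rho(\bJ|_{E^c}) \leq 1$ as desired. In particular $\nu < \mu$ can be arranged. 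I would then work with the cone $C = \{\bz : \|P_c\bz\|_* \leq \|P_u\bz\|_*\}$ around $E^u$ and show that on a sufficiently small neighborhood of $\by^*$, any $\bz \in C$ satisfies both $\bg(\by^*+\bz) - \by^* \in C$ and $\|\bg(\by^*+\bz) - \by^*\|_* \geq \kappa\|\bz\|_*$ for some $\kappa > 1$. An initial value $\by^0 = \by^* + \bz^0$ with $\bz^0 \in C \setminus \{0\}$ of arbitrarily small norm then produces an orbit that eventually leaves any prescribed neighborhood of $\by^*$, contradicting Lyapunov stability.

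The main obstacle is this cone/expansion argument: the remainder $\bh$ couples the $E^u$ and $E^c$ directions, so the clean linear estimates must be perturbed. One simultaneously needs $\mu - c\eta > \nu + c\eta$ (cone invariance) and $\mu - c\eta > 1$ (expansion), where $c$ depends on $\|P_u\|_*$ and $\|P_c\|_*$; shrinking the neighborhood supplies the required smallness of $\eta$. The strict spectral split between $|\lambda| > 1$ and $|\lambda| \leq 1$ is precisely what guarantees the gap $\mu > \nu$ needed to close these inequalities.
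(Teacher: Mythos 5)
The paper does not prove this statement at all: it is imported verbatim as \cite[Theorem~1.3.7]{SH98} and used as a black box, so there is no in-paper proof to compare against. Judged on its own, your argument is the standard linearization proof and is sound. Part~(a) is complete as written: the adapted norm with $\lVert \bJ\rVert_*\le \rho(\bJ)+\eta<1$ plus the little-$o$ remainder (which only needs differentiability of $\bg$ at $\by^*$, so $\mathcal C^1$ is more than enough) gives the contraction $(q+\epsilon)<1$, forward invariance of the ball, and geometric convergence, which yields both Lyapunov and asymptotic stability. Part~(b) is a correct plan rather than a finished proof, as you acknowledge; the cone argument does close, and the point you flag is exactly the right one. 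Writing $\bz=\bu+\bv$ with $\bu=P_u\bz$, $\bv=P_c\bz$ and using an adapted norm with $\lVert\bz\rVert_*=\max\{\lVert\bu\rVert_*,\lVert\bv\rVert_*\}$, the cone condition gives $\lVert\bz\rVert_*=\lVert\bu\rVert_*$, so $\lVert\bh\rVert_*\le\epsilon\lVert\bz\rVert_*=\epsilon\lVert\bu\rVert_*$, and then $\lVert P_u(\bJ\bz+\bh)\rVert_*\ge(\mu-c\epsilon)\lVert\bu\rVert_*$ while $\lVert P_c(\bJ\bz+\bh)\rVert_*\le(\nu+c\epsilon)\lVert\bu\rVert_*$; for $\epsilon$ small enough both cone invariance ($\nu+c\epsilon\le\mu-c\epsilon$) and expansion ($\mu-c\epsilon>1$) hold simultaneously because $\nu<\mu$ and $\mu>1$ are strict. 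Starting from $\bz^0\in E^u\setminus\{\bzero\}$ of arbitrarily small norm, the orbit expands by a fixed factor $\kappa>1$ per step while it remains in the chosen neighborhood, hence must exit it, which negates Definition~\ref{Def_Lyapunov_Diskr}\,a). The only cosmetic caveat is that the constants $c$ depend on the projections $P_u,P_c$, which are bounded in the adapted norm, so nothing breaks. In short: a correct, self-contained proof of a result the paper only cites.
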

The above theorem makes no statement for cases in which $\mathbf D\mathbf g(\mathbf y^*)$ has a spectral radius of $1$\suggest{, which is related to the case of stable fixed points that are not asymptotically stable}. In such a case the following Theorem~\ref{Thm_MPRK_stabil} can be helpful. To formulate this theorem, we introduce the following notations.
Let $\bA\in \R^{N\times N}$ such that $\bn_1,\dotsc,\bn_k$ with $k\geq 1$ form a basis of $\ker(\bA^\mathsf{T})$, we define the matrix
\begin{equation*}\label{eq:N}
	\bN=\begin{pmatrix}
		\bn_1^\mathsf{T}\\
		\vdots\\
		\bn_k^\mathsf{T}
	\end{pmatrix}\in \R^{k\times N} 
\end{equation*}
as well as the set
\begin{equation}
	H=\{\by\in \R^N\mid \bN\by=\bN\by^*\}\suggest{,}\label{eq:H}
\end{equation}
\suggest{where $\by^*\in \ker(\bA)$ is assumed to be a steady state of  $\by'(t)=\bA\by(t)$. Note that the solution $\by$ satisfies $\by(t)\in H$ for all $t\geq 0$ if and only if $\by(0)\in H$.}
\begin{thm}[{\cite[Theorem 2.9]{izgin2022stability}}]\label{Thm_MPRK_stabil}
	Let $\bA\in \R^{N\times N}$ such that $\ker(\bA)=\Span(\bv_1,\dotsc,\bv_k)$ represents a $k$-dimensional subspace of $\R^N$ with $k\geq 1$. Also, let $\by^*\in \ker(\bA)$ be a fixed point of $\bg\from D\to D$ where $D\tm \R^N$ contains a neighborhood $\mathcal D$ of $\by^*$. Moreover, let any element of $\ker(\bA)\cap \mathcal D$ be a fixed point of $\bg$ and suppose that $\bg\big|_\mathcal{D}\in \mathcal C^1$ as well as that the first derivatives of $\bg$ are Lipschitz continuous on $\mathcal{D}$. Then $\bD\bg(\by^*)\bv_i=\bv_i$ for $i=1,\dotsc, k$ and the following statements hold.
	\begin{enumerate}
		\item\label{it:Thma} If the remaining $N-k$ eigenvalues of $\bD\bg(\by^*)$ have absolute values smaller than $1$, then $\by^*$ is stable.\label{It:Thm_Stab_a}
		\item\label{it:Thmb} Let $H$ be defined by \eqref{eq:H} and $\bg$ conserve all linear invariants, which means that $\bg(\by)\in H\cap D$ for all $\by\in H\cap D$. If additionally the assumption of \ref{It:Thm_Stab_a} is satisfied, then there exists a $\delta>0$ such that $\by^0\in H\cap D$ and $\norm{\by^0-\by^*}<\delta$ imply $\by^n\to \by^*$ as $n\to \infty$.
	\end{enumerate}
\end{thm}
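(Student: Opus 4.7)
My plan is to apply the center manifold theorem for maps and exploit the fact that the fixed-point set contains an entire $k$-dimensional linear slice through $\by^*$.

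For part a), each curve $\gamma_i(t) = \by^* + t\bv_i$ lies in $\ker(\bA)\cap \mathcal{D}$ for $|t|$ small enough, hence consists entirely of fixed points of $\bg$. Differentiating $\bg(\gamma_i(t)) = \gamma_i(t)$ at $t=0$ yields $\bD\bg(\by^*)\bv_i = \bv_i$. Combined with the hypothesis that the remaining $N-k$ eigenvalues of $\bD\bg(\by^*)$ lie strictly inside the unit disk, this guarantees that the spectrum splits cleanly into a center part on $E^c = \ker(\bA)$ (all equal to $1$, with geometric and algebraic multiplicity both $k$, since $\ker(\bA)$ already supplies $k$ linearly independent eigenvectors) and a stable part on a complementary invariant subspace $E^s$ of dimension $N-k$.

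Because $\bg$ is $\mathcal{C}^1$ with locally Lipschitz first derivative, the graph-transform form of the center manifold theorem for maps (e.g.\ Hadamard--Perron type) applies and produces a locally invariant center manifold $W^c$ through $\by^*$, tangent to $E^c$. Since $\ker(\bA)\cap \mathcal{D}$ itself is a $k$-dimensional affine slice tangent to $E^c$ at $\by^*$ whose points are all fixed by $\bg$, one can identify $W^c$ (on a possibly smaller neighborhood) with $\ker(\bA)\cap \mathcal{D}$. The accompanying reduction principle then states that every orbit starting sufficiently close to $\by^*$ is exponentially shadowed by an orbit on $W^c$; but such orbits are constant, so $\by^n \to \bar\by$ for some $\bar\by \in W^c \subseteq \ker(\bA)$. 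In particular the iterates stay uniformly close to $\by^*$, proving stability and local convergence to some fixed point in $\ker(\bA)$.

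For part b), the conservation hypothesis $\bg(H\cap D)\subseteq H\cap D$ keeps orbits starting in $H\cap D$ inside $H\cap D$, and since $H$ is closed the limit $\bar\by$ lies in $H\cap \ker(\bA)$. Now the rows of $\bN$ span $\ker(\bA^\mathsf{T})$, so $\ker(\bN) = \ker(\bA^\mathsf{T})^\perp = \im(\bA)$, and therefore $H\cap \ker(\bA) = \by^* + (\ker(\bA)\cap \im(\bA))$. In the setting of the paper (where the zero eigenvalue of $\bA$ is semisimple), this intersection is trivial, so $\bar\by = \by^*$. The main obstacle is the invocation of the center manifold theorem at the reduced regularity $\mathcal{C}^1$ with merely Lipschitz first derivative, which precludes classical smooth invariant manifold arguments and forces one to use the Lipschitz graph-transform framework together with its exponential shadowing estimate; a secondary subtlety is the non-uniqueness of center manifolds, which is resolved here by taking the canonical choice $W^c = \ker(\bA)\cap \mathcal{D}$.
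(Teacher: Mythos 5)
The paper does not actually prove this theorem---it is imported verbatim from the cited reference---but the route you take (eigenvalue splitting, a local center manifold identified with the fixed-point slice $\ker(\bA)\cap\mathcal D$, and the reduction/shadowing principle at $\mathcal C^1$-with-Lipschitz-derivative regularity) is precisely the approach attributed to that source in the introduction, and your argument is essentially correct. The one point worth making explicit: part b) genuinely needs $\im(\bA)\cap\ker(\bA)=\{0\}$ (i.e.\ the zero eigenvalue of $\bA$ semisimple), since otherwise $H\cap\ker(\bA)=\by^*+\bigl(\im(\bA)\cap\ker(\bA)\bigr)$ contains a whole segment of fixed points and convergence to $\by^*$ fails; the statement as transcribed here omits that hypothesis, but you correctly localized where it enters, and it holds in every application the paper makes (simple zero eigenvalue for the $2\times 2$ problems, and $\mu_\bA(0)=\gamma_\bA(0)$ assumed explicitly for GeCo1).
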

We want to point out that the above theorem is a generalization of \cite[Theorem 2.9]{IKM2122} to linear systems of arbitrary finite size. \suggest{In a nutshell, this theorem allows to reduce the question of stability of a non-hyperbolic fixed point of a sufficiently smooth map to an eigenvalue problem, if the fixed points form a subspace. Even more, part \ref{it:Thmb} of this theorem allows us to conclude the local convergence of the iterates towards the steady state solution $\by^*$ along the plane $H$, if additionally all linear invariants are preserved. We additionally want to highlight, that these theorems can also be applied in the context of nonlinear systems of differential equations, see \cite[Remark 2.10]{izgin2022stability} or \cite{IKMnonlin22} for an example.}

\suggest{It is also worth mentioning, that using Theorem \ref{Thm:_Asym_und_Instabil} and Theorem \ref{Thm_MPRK_stabil} in the context of general linear methods such as Runge--Kutta schemes, the eigenvalues of the Jacobian are related to the known stability function of the method.}

\section{GeCo Schemes}
\label{sec:GeCo}
\suggest{A} class of numerical methods that preserve all linear invariants as well as positivity is given by GeCo schemes \suggest{introduced in} \cite{martiradonna2020geco}.
The first order GeCo scheme (GeCo1) applied to a general biochemical system (\ref{eq:bio})
is defined as
\begin{equation}
	\by^{n+1}=\by^n+\Delta t\varphi\left(\Delta t \sum_{j=1}^N  \dfrac{f_i^{[D]}(\by^n)}{y_i^n}\right)\bff(\by^n),\label{eq:GeCo1scheme}
\end{equation}
where the vector field $\bff(\by)=\bS\mathbf r(\by)$ is split into production and destruction parts as
\[\bff(\by)=\bff^{[P]}(\by)-\bff^{[D]}(\by),\qquad \bff^{[P]}(\by)=\bS^+\mathbf{r}(\by),\ \bff^{[D]}(\by)=\bS^-\mathbf{r}(\by),\quad \bS^+,\bS^-\geq \bzero,\]
and the function $\varphi$ is defined as
\begin{equation}\label{eq:phi}
	\varphi(x)=\begin{cases}
		\dfrac{1-e^{-x}}{x},& x>0,\\
		1, &x=0.
\end{cases}   \end{equation}
The second order GeCo scheme (GeCo2) for a general biochemical system (\ref{eq:bio}) is defined by
\begin{equation}\label{eq:GeCo2scheme}
\begin{aligned}
    \by^{(2)}&=\by^n+\Delta t\varphi\left(\Delta t \sum_{j=1}^N  \dfrac{f_i^{[D]}(\by^n)}{y_i^n}\right)\bff(\by^n),\\
	\mathbf y ^{n+1}&=\mathbf{y}^n + \dfrac{\Delta t}{2} \varphi\left(\Delta t \sum_{i=1}^N \dfrac{w_i^+(\mathbf y^n)}{y^n_i} \right) \left(\mathbf{f}(\mathbf y^n)+ \mathbf f(\mathbf y^{(2)})\right),
\end{aligned}
\end{equation}
where 
\begin{equation*}
	w_i^+(\by^n)=\max(0,w_i(\by^n)), \quad i=1,\dots,N
\end{equation*}
with 
\begin{equation*}
	\mathbf w(\by^n)=2\varphi\left(\Delta t \sum_{j=1}^N  \dfrac{f_i^{[D]}(\by^n)}{y_i^n}\right)\bff(\mathbf y^n) - \bff(\mathbf{y}^n)-\bff(\mathbf{y}^{(2)}).
\end{equation*}

We will start analyzing \suggest{GeCo1 applied to a general positive linear test problem with stable steady states and linear invariants. Turning to }GeCo2\suggest{, we} prove that already for the $2\times 2$ system \eqref{PDS_test} the stability domain of GeCo2 is bounded. 

\subsection{Stability of GeCo1}
In this section, we investigate the stability properties of GeCo1, see \eqref{eq:GeCo1scheme}, when applied to linear and positive systems of ordinary differential equations $\by'=\bA\by$. Before we formulate assumptions on the system matrix $\bA$, we introduce the  algebraic multiplicity $\mu_\bA(\lambda)$ of the eigenvalue $\lambda\in \sigma(\bA)$ as well as the corresponding geometric multiplicity $\gamma_\bA(\lambda)$, where $\sigma(\bA)$ is the spectrum of $\bA$. 

The presence of $k\geq 1$ linear invariants means that $\gamma_\bA(0)=k$, so that we consider in the following systems of the form
\begin{equation}\label{eq:PDS_Sys}
	\by'=\bA\by,\quad \bA\neq\bzero,\quad  \bA-\diag(\bA)\geq \bzero,\quad \mu_\bA(0)=\gamma_\bA(0)=k\geq 1,\quad \sigma(\bA)\tm \C^-=\{z\in \C\mid \re(z)\leq 0\},
\end{equation}
where $\diag(\bA)$ denotes the diagonal of $\bA$. In particular, $\bA-\diag(\bA)\geq \bzero$ means that $\bA$ is a Metzler matrix guaranteeing the positivity of the analytic solution. Moreover, in the presence of linear invariants, the conditions $\mu_\bA(0)=\gamma_\bA(0)$ and $\sigma(\bA)\tm \C^-$ are necessary for the stability of steady states of $\by'=\bA\by$, see \cite[Theorem 3.23]{MR1912409}. 

\begin{rem}\label{rem:Aneg}
	We want to mention that at least one diagonal element of $\bA$ is negative. Otherwise we find $\diag(\bA)\geq \bzero$, and hence, $\bA\geq \bzero$. Then, due to  $\mu_\bA(0)=\gamma_\bA(0)=k$ and $\bA\neq \bzero$ we find that $k<N$, and thus, there exists a nonzero eigenvalue of $\bA$. Therefore, $\bA$ is not similar to a strictly upper triangular matrix. Utilizing a generalization of the Perron-Frobenius Theorem \cite[Theorem 2.20]{V00} yields that $\bA$ possesses a positive eigenvalue contradicting $\sigma(\bA)\tm \C^-$. This means that $\bA$ is a so-called proper Metzler Matrix\suggest{, i.\,e.\ a Metzler matrix $\bA$ with at least one negative diagonal element}.
\end{rem}

For the analysis of GeCo1, we first rewrite $\by'=\bA\by$ as a production-destruction system of the form
\begin{equation}\label{eq:split_A}
	\by'=	\bA\by=\bS^+\by-\bS^-\by
\end{equation}
with $\bS^+,\bS^-\geq \bzero$. Since $\bA$ is a Metzler matrix, $\bS^-=(s^-_{ij})_{i,j=1,\dotsc,N}$ is a diagonal matrix and $f_i^{[D]}(\by)=s_{ii}^-y_i$. Moreover, Remark \ref{rem:Aneg} states that at least one diagonal element of $\bA$ is negative, which results in 
\begin{equation}
	\tr(\bS^-)>0.\label{eq:tr(S-)>0}
\end{equation}
With this in mind, let us recall the function $\varphi$ from \eqref{eq:phi}, that is
\begin{equation*}
	\varphi(x)=\begin{cases}\frac{1-e^{-x}}{x},& x>0,\\
		1, &x=0
	\end{cases}
\end{equation*} 
and write the GeCo1 scheme \eqref{eq:GeCo1scheme} applied to \eqref{eq:split_A} as
\begin{equation*}
	\begin{aligned}
		\mathbf{g}(\mathbf{y}^{n})&=\mathbf{y}^{n+1} =  \mathbf{y}^{n} + \Delta t \varphi\left(\Delta t\sum_{i=1}^N \dfrac{f_i^{[D]}(\by)}{y_i^{n}} \right) \bA\mathbf{y}^{n}=  \mathbf{y}^{n} + \Delta t\varphi(\Delta t\tr(\bS^-)) \bA\mathbf{y}^{n}.
\end{aligned}\end{equation*}
Due to \eqref{eq:tr(S-)>0}, the GeCo1 scheme can be rewritten as
\begin{equation}
	\by^{n+1}=\bg(\by^n)=(\bI+\Phi(\Delta t)\bA)\by^n,\quad \Phi(\Delta t)=\Delta t\varphi(\Delta t\tr(\bS^-))=\tfrac{1-e^{-\Delta t\tr(\bS^-)}}{\tr(\bS^-)}.\label{eq:ThmGeco1scheme}
\end{equation}

Note that steady states of \eqref{eq:PDS_Sys} become fixed points of $\bg$, and that $\bg\in \mathcal C^\infty$ conserves all linear invariants. Hence, we are in the position to apply Theorem \ref{Thm_MPRK_stabil}. It is worth noting that the eigenvalues of the Jacobian of the GeCo1 map $\bg$ in general not only depend on $\Delta t\lambda$, but also on the trace of $\bS^-$.
Nevertheless, we are able to prove that in the case of GeCo1, the remaining $N-k$ eigenvalues of $\bD\bg(\by^*)$ lie inside the unit circle, resulting in the following theorem.
\begin{thm}\label{Thm:GeCo1}
Any steady state 
	$\by^*$ of \eqref{eq:PDS_Sys} is a stable fixed point of GeCo1 given by \eqref{eq:ThmGeco1scheme} and there exists a $\delta>0$ such that $\bn_j^\mathsf{T} \by^0=\bn_j^\mathsf{T}\by^*$ for $\bn_j\in \ker(\bA^\mathsf{T})$ and $j=1,\dotsc,k$ as well as $\norm{ \by^0-\by^*}<\delta$ imply $\lim_{n\to\infty}\by^n=\by^*$ for all $\Delta t>0$. 
\end{thm}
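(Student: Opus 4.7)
The plan is to verify the hypotheses of Theorem \ref{Thm_MPRK_stabil} directly. Because the iteration map $\bg$ from \eqref{eq:ThmGeco1scheme} is affine-linear, it is automatically $\mathcal C^\infty$ with globally Lipschitz first derivatives, every $\bv \in \ker(\bA)$ is a fixed point of $\bg$, and the scheme conserves all linear invariants since $\bn_j \in \ker(\bA^\mathsf{T})$ immediately yields $\bn_j^\mathsf{T}\bg(\by) = \bn_j^\mathsf{T}\by$. Thus the only nontrivial task is to control the spectrum of $\bD\bg(\by^*) = \bI + \Phi(\Delta t)\bA$: the $k$ eigenvalues associated with $\ker(\bA)$ equal $1$, and I have to show that the remaining $N - k$ eigenvalues $1 + \Phi(\Delta t)\lambda$, $\lambda \in \sigma(\bA) \setminus \{0\}$, lie strictly inside the unit disk.

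My main idea is a Perron--Frobenius shift. Set $\beta = \tr(\bS^-) > 0$ (positive by Remark \ref{rem:Aneg}) and rewrite
\[
\bI + \Phi(\Delta t)\bA = (1 - \Phi(\Delta t)\beta)\bI + \Phi(\Delta t)\bB, \qquad \bB = \bA + \beta\bI.
\]
A short computation yields $1 - \Phi(\Delta t)\beta = e^{-\Delta t\beta} \in (0,1)$, and since $\bA$ is Metzler with $\beta \geq \max_i(-a_{ii})$, the matrix $\bB$ is entrywise nonnegative. Perron--Frobenius then pins down $\rho(\bB)$: $\beta \in \sigma(\bB)$ (coming from $0 \in \sigma(\bA)$) gives $\rho(\bB) \geq \beta$, while $\rho(\bB)$ is itself a nonnegative real eigenvalue of $\bB$, so $\rho(\bB) > \beta$ would force the strictly positive real number $\rho(\bB) - \beta$ to lie in $\sigma(\bA)$, contradicting $\sigma(\bA) \subset \C^-$. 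Hence $\rho(\bB) = \beta$ and every $\mu \in \sigma(\bB)$ satisfies $\abs{\mu} \leq \beta$.

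For the final estimate, pick $\lambda \in \sigma(\bA) \setminus \{0\}$ and set $\mu = \lambda + \beta \in \sigma(\bB)$; note that $\mu \neq \beta$. With $r = 1 - \Phi(\Delta t)\beta$ and $s = \Phi(\Delta t)\beta$, both in $(0,1)$ and $r + s = 1$, I obtain
\[
\abs{1 + \Phi(\Delta t)\lambda} = \abs{r + s(\mu/\beta)} \leq r + s(\abs{\mu}/\beta) \leq r + s = 1.
\]
Strictness holds somewhere: if $\abs{\mu} < \beta$ the second inequality is strict, and if instead $\abs{\mu} = \beta$ but $\mu \neq \beta$, then $\mu/\beta = e^{i\theta}$ with $e^{i\theta} \neq 1$, so $r$ and $s(\mu/\beta)$ are not collinear on the positive real axis and the triangle inequality is strict. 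Thus $\abs{1 + \Phi(\Delta t)\lambda} < 1$ in all cases, and Theorem \ref{Thm_MPRK_stabil} delivers both Lyapunov stability and the asserted local convergence on the invariant affine subspace for every $\Delta t > 0$.

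The main obstacle is the spectral step, and within it the delicate point is handling eigenvalues $\mu$ of $\bB$ on the critical circle $\abs{\mu} = \beta$: there Perron--Frobenius alone only gives $\leq$, and strictness relies on the elementary but essential observation that the triangle inequality is strict for noncollinear complex summands. The entire argument hinges on recognizing that shifting $\bA$ by exactly $\beta = \tr(\bS^-)$ recasts $\bD\bg(\by^*)$ as a convex combination $r\bI + s(\bB/\beta)$ of matrices of spectral radius one.
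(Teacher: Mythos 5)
Your proof is correct, and it takes a genuinely different route from the paper's. The paper computes the explicit stability condition $\abs{1+\Phi(\Delta t)\lambda_i}<1 \Leftrightarrow \Phi(\Delta t)<2\abs{\re(\lambda_i)}/\abs{\lambda_i}^2$, reduces everything to showing $M\tr(\bS^-)\geq 1$ for $M=\min_i 2\abs{\re(\lambda_i)}/\abs{\lambda_i}^2$, and establishes that bound by citing a spectral inclusion for Metzler matrices ($\sigma(\bA)$ lies in the disk of radius $\abs{r}$ centered at $r=\min_j a_{jj}$) together with a geometric argument via Thales's theorem. You instead shift by $\beta=\tr(\bS^-)$ to get a nonnegative matrix $\bB=\bA+\beta\bI$, use Perron--Frobenius plus $\sigma(\bA)\subseteq\C^-$ to pin down $\rho(\bB)=\beta$, and read off $\bD\bg(\by^*)=r\bI+s(\bB/\beta)$ as a convex combination with $r=e^{-\Delta t\beta}>0$, $s=1-e^{-\Delta t\beta}>0$; the strict triangle inequality for noncollinear summands then handles eigenvalues on the critical circle. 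The two spectral inclusions are consistent (your disk $\{\abs{z+\beta}\leq\beta\}$ contains the paper's, since $\beta\geq\abs{\min_j a_{jj}}$), but your derivation is self-contained modulo standard Perron--Frobenius theory and dispenses with both the external localization result and the geometric construction; it is also robust to the precise choice of the diagonal splitting $\bS^-$, since any admissible choice satisfies $s^-_{ii}\geq -a_{ii}$ and hence $\bB\geq\bzero$. What the paper's route buys in exchange is the explicit threshold quantity $M$ and the sharper disk centered at $\min_j a_{jj}$, which quantify the slack in the bound $\Phi(\Delta t)<M$ and could be reused for other step-size modification functions. The remaining hypotheses of Theorem~\ref{Thm_MPRK_stabil} (affine iteration map, fixed points on $\ker(\bA)$, conservation of linear invariants) are verified identically in both arguments.
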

\begin{proof}
	The Jacobian $\bD\bg(\by^*)$ reads
	\begin{equation}\label{eq:Dg(y*)Geco1}
		\bD\bg(\by^*)=\bI+\Phi(\Delta t)\bA=\bI+\frac{1-e^{-\Delta t\tr(\bS^-)}}{\tr(\bS^-)}\bA
	\end{equation} 
	and its eigenvalues are \[\mu=1+\Phi(\Delta t)\lambda\] with $\lambda\in \sigma(\bA)$.
	Hereby, we see that 
	$\bv\in \ker(\bA)\setminus\{ \bzero\}$ is an eigenvector of the Jacobian $\bD\bg(\by^*)$ with an associated eigenvalue of $1$.
	
	In order to investigate the location of the remaining $N-k$ eigenvalues of the Jacobian, 
	we first numerate the distinct and nonzero eigenvalues of $\bA$ from \eqref{eq:PDS_Sys} by $\lambda_1,\dotsc,\lambda_m$.
	Now, the corresponding eigenvalues $\mu_i=1+\Phi(\Delta t)\lambda_i$ with $i=1,\dotsc,m$ lie inside the unit circle if and only if
	\[\lvert 1+\Phi(\Delta t)\lambda_i \rvert^2<1,\quad i=1,\dotsc,m,\]
	which can be written as
	\[(\re(\Phi(\Delta t)\lambda_i)+1)^2+\im (\Phi(\Delta t)\lambda_i)^2<1,\quad i=1,\dotsc,m,\]
	or equivalently,
	\[2\Phi(\Delta t)\re(\lambda_i)+\Phi(\Delta t)^2\lvert \lambda_i\rvert^2<0,\quad  i=1,\dotsc,m.\]
	Dividing by $\Phi(\Delta t)>0$ and exploiting $\sigma(\bA)\tm \C^-$ gives
	\[\Phi(\Delta t)< -\frac{2\re(\lambda_k)}{\lvert \lambda_i\rvert^2}=\frac{2\lvert \re(\lambda_i)\rvert}{\lvert \lambda_i\rvert^2},\quad  i=1,\dotsc,m.\]
	\suggest{Introducing} 	
	\begin{equation}\label{eq:M_eig}
		M=\min_{i=1,\dotsc,m}\left\{2\tfrac{\lvert\re(\lambda_i)\rvert}{\lvert\lambda_i\rvert^2}\right\},
	\end{equation} we end up with the equivalent condition
	\[ \Phi(\Delta t)<M.\]
	Hence, after plugging in $\Phi(\Delta t)=\tfrac{1-e^{-\Delta t\tr(\bS^-)}}{\tr(\bS^-)}$,
	we multiply with its denominator $\tr(\bS^-)>0$, see \eqref{eq:tr(S-)>0}, to get
	\begin{equation*}
		\lvert \mu_i \rvert^2<1,\quad i=1,\dotsc,m
	\end{equation*}
	if and only if
	\begin{equation*}
	 1-e^{-\Delta t\tr(\bS^-)}<M \tr(\bS^-).
	\end{equation*}
	Now, if $M \tr(\bS^-)\geq 1$, then \[M\tr(\bS^-)\geq 1>1-e^{-\Delta t\tr(\bS^-)}\] is true for all $\Delta t>0$, and hence, the remaining eigenvalues of $\bD\bg(\by^*)$ associated with nonzero eigenvalues of $\bA$ lie inside the unit circle. We now aim to prove that 
	\[M \tr(\bS^-)\geq 1\]
	is indeed the case. 
	
Due to \cite[Theorem 10, Corollary 11]{StabMetz} it holds that \[\sigma(\bA)\tm\mathcal B= \left\{ z\in \C \, \Big| \, \abs{z-r}\leq \abs r, r=\min_{j=1,\dotsc N} a_{jj}\right\},\] \suggest{where $r<0$ follows from Remark \ref{rem:Aneg}. As a result, we find} $\re(\lambda)<0$ as well as $\arg(\lambda)\in(\tfrac\pi2,\tfrac32\pi)$  \suggest{for all} $0\neq\lambda\in\sigma(\bA)$. 
For \suggest{any} given $\lambda\in \sigma(\bA)\setminus\{0\}$, we define $\alpha=\pi-\arg(\lambda)\in(-\tfrac\pi2,\tfrac{\pi}{2})$, so that 
	\[ \cos(\alpha)=\frac{\abs{\re(\lambda)}}{\abs{\lambda}}\neq 0.\]
	Next, we choose $\theta<0$ satisfying
	\[\abs\lambda=\cos(\alpha)\abs\theta.\]
	A sketch of this geometry can be found in Figure \ref{Fig:proof}.
	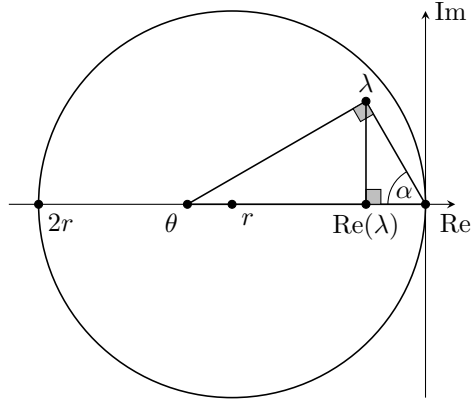
\begin{figure}[!h]
		\centering
		\begin{scaletikzpicturetowidth}{0.4\textwidth}
			\begin{tikzpicture}[scale=\tikzscale]
				\tkzInit[xmin=-5,xmax=0,ymin=-3,ymax=3]
				\draw [-stealth](-7,0) -- (0.5,0) node[below]{$\re$};
				\draw [-stealth](0,-3.25) -- (0,3.25) node[right]{$\im$};
				\tkzDefPoint[label=below left:{$\theta$}](-4,-0){P}
				\tkzDefPoint[label=below right:{$2 r $}](-6.5,-0){B}
				\tkzDefPoint[label=below right:{$ r $}](-3.25,-0){X}
				\tkzDefPoint(0,0){Q}
				\tkzDefPoint(-2,0){D}
				\tkzDefPoint[label=above:{$\lambda$}](-1,3^0.5){R}
				\tkzDrawSegments(P,Q Q,R R,P)
				\tkzDefPoint[label=below:{$\re(\lambda)$}](-1,0){N}

				\tkzMarkRightAngle[fill=lightgray](Q,N,R)
				\tkzMarkRightAngle[fill=lightgray](P,R,Q)
				\tkzDrawPoints[color=black](P,Q,R,N,B,X)
				\tkzDrawSegment(R,N)
				\pic [draw, -, "$\alpha$", angle eccentricity=0.65] {angle = R--Q--P};
				\tkzDrawCircles(X,B)
			\end{tikzpicture}
		\end{scaletikzpicturetowidth}
		\caption{Sketch of the geometric setup for $\arg(\lambda)\in(\tfrac\pi2,\pi)$ and $r=\min_{j=1,\dotsc,N}a_{jj}$.}\label{Fig:proof}
	\end{figure}
	With this, equation \eqref{eq:M_eig} becomes
	\[ M=\min_{i=1,\dotsc,m}\left\{2\tfrac{\lvert\re(\lambda_i)\rvert}{\lvert\lambda_i\rvert^2}\right\}=\min_{i=1,\dotsc,m}\left\{2\tfrac{\cos(\alpha_i)}{\lvert\lambda_i\rvert}\right\}=\min_{i=1,\dotsc,m}\left\{\tfrac{2}{\lvert \theta_i\rvert}\right\}.\]
	Moreover, with Thales's Theorem we can conclude that even $\theta_i\in \R^-$ is contained in $\mathcal B$, and thus, satisfies \[\abs{\theta_i}\leq 2\abs{\min_{j=1,\dotsc,N}a_{jj}}.\]
	Since $\bA$ is a proper Metzler matrix, see Remark \ref{rem:Aneg}, there exists an $l\in\{1,\dotsc,N\}$ such that \[\min_{j=1,\dotsc,N}a_{jj}=a_{ll}<0,\]
	from which it follows that
	\[M=\min_{i=1,\dotsc,m}\left\{\tfrac{2}{\lvert \theta_i\rvert}\right\}\geq\frac{2}{2\abs{\min_{j=1,\dotsc,N}a_{jj}}}=\frac{1}{\abs{a_{ll}}}. \]
	Additionally, setting $S=\{j\in\{1,\dotsc,N\}\mid a_{jj}<0\}$ we find
	\[\tr(\bS^-)=-\sum_{j\in S}^Na_{jj}=\sum_{j\in S}^N\lvert a_{jj}\rvert \geq \abs{a_{ll}},\]
	and thus
	\[M\tr(\bS^-)\geq\frac{1}{\abs{a_{ll}}}\lvert a_{ll}\rvert= 1,\]
	which finishes the proof.
\end{proof}
\suggest{With this theorem, a stability result for the GeCo1 scheme is provided for the first time. Having proved the unconditional stability of all fixed points of GeCo1 associated with steady states of the general $N\times N$ system of differential equations \eqref{eq:PDS_Sys}, we can conclude that GeCo1 mimics the stability behavior of the analytic solution close to a steady state solution for any chosen time step size $\Delta t>0$.}


\subsection{Stability of GeCo2}\label{subsec:StabGeCo}
In this section we aim to prove that GeCo2 applied to \eqref{PDS_test} can be described by a $\mathcal{C}^1$-map \suggest{using Lemma \ref{Lem:diff} from the appendix}, and to compute the spectrum of the corresponding Jacobian. To prove that the partial derivatives are even locally Lipschitz continuous, we use \suggest{Lemma \ref{Lem:locallyLip} from the appendix}.

\suggest{Let} us investigate the GeCo2 scheme applied to \eqref{PDS_test} with \[\bA=\underbrace{\Vec{0 & bc\\a & 0}}_{=\mathbf S^+} - \underbrace{\Vec{ac & 0\\0 & b}}_{=\mathbf S^-} \] and $\mathbf r(\by)=\by$. This means that $f^{[D]}(\by)=\mathbf S^-\mathbf r(\by)=\vec{acy_1\\by_2}$, and hence the GeCo2 scheme \suggest{\eqref{eq:GeCo2scheme}} reads
\begin{equation}\label{eq:yGeco2}
	\begin{split}
		\displaystyle \mathbf{y}^{(2)}&=\mathbf{y}^{n}+\Delta t\varphi(\Delta t\suggest{\tr(\bS^-)}) \bA\mathbf{y}^{n}\\
		\displaystyle \mathbf{y}^{n+1} &= 
		\mathbf{y}^{n} + \dfrac{1}{2} \Delta t\varphi\left(\Delta t\left(\dfrac{w_1^+(\by^n)}{y_1^n}+\dfrac{w_2^+(\by^n)}{y_2^n}\right)\right)  \bA\left(\mathbf{y}^{n} +\mathbf{y}^{(2)}\right)\\
		&=\mathbf{y}^{n} + \dfrac{1}{2} \Delta t\varphi\left(\Delta t\left(\dfrac{w_1^+(\by^n)}{y_1^n}+\dfrac{w_2^+(\by^n)}{y_2^n}\right)\right)  \bA\left(2\mathbf{y}^{n}+\Delta t\varphi(\Delta t\suggest{\tr(\bS^-)}) \bA\mathbf{y}^{n}\right),
	\end{split}
\end{equation}
where $	w_i^+(\by^n)=\max(0,w_i(\by^n))$ for $i=1,2$ and
\begin{equation}
	\begin{aligned}\label{eq:w_MatrixForm}
		\mathbf w(\by^n)&=2\varphi(\Delta t\suggest{\tr(\bS^-)})\bA\mathbf y^n - \bA\mathbf{y}^n-\bA\mathbf{y}^{(2)}\\&=\left(2 \varphi(\Delta t\suggest{\tr(\bS^-)})\bA-2\bA-\bA^2\Delta t\varphi(\Delta t\suggest{\tr(\bS^-)})\right)\by^n.
	\end{aligned}
\end{equation}

We formulate a helpful lemma to understand some properties of $\bw$ and to express equation \eqref{eq:yGeco2} with $\bw$ rather than $w_1^+$ and $w_2^+$.
\begin{lem}\label{Lem:w}
	The map $\bw$ from \eqref{eq:w_MatrixForm} with $\bA$ from \eqref{PDS_test} satisfies $w_1=-\tfrac1cw_2$, and we have
	\[w_1(\by^n)\begin{cases}>0, &y^n_1>\tfrac{b}{a}y^n_2, \\
		=0, &y^n_1=\tfrac{b}{a}y^n_2,\\
		<0, & y^n_1<\tfrac{b}{a}y^n_2.\end{cases} \]
\end{lem}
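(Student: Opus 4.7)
The plan is to first reduce $\bw$ to a scalar multiple of $\bA\by^n$, then deduce both the claimed proportionality between $w_1$ and $w_2$ and the sign trichotomy from this representation together with the conservation law.

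The first step relies on Cayley--Hamilton. The matrix $\bA$ has characteristic polynomial $\lambda(\lambda + ac+b)$, so $\bA^2 = -(ac+b)\bA$. Substituting this into \eqref{eq:w_MatrixForm}, writing $z = \Delta t(ac+b) > 0$ and using the identity $z\varphi(z) = 1 - e^{-z}$, the expression for $\bw$ collapses to
\[
\bw(\by^n) = \beta(z)\,\bA\by^n, \qquad \beta(z) = 2\varphi(z) - 1 - e^{-z}.
\]

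From this representation the two claims become transparent. Since $y_1 + cy_2$ is the linear invariant of \eqref{PDS_test}, the row vector $(1,c)$ lies in $\ker(\bA^\mathsf{T})$; applying it to $\bw$ gives $w_1 + cw_2 = \beta(z)\,(1,c)\bA\by^n = 0$, which yields the asserted linear relation between $w_1$ and $w_2$. Furthermore, $(\bA\by^n)_1 = c(by_2^n - ay_1^n)$, so once we establish $\beta(z) < 0$ for every $z > 0$, the sign of $w_1 = \beta(z)(\bA\by^n)_1$ is opposite to that of $by_2^n - ay_1^n$, producing exactly the three cases of the trichotomy (with the middle case $w_1 = 0$ coming from $\bA\by^n = \bzero$ when $y_1^n = \tfrac{b}{a}y_2^n$).

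The only nontrivial step is therefore showing $\beta(z) < 0$ for all $z > 0$, which I would carry out via the auxiliary function $\gamma(z) = z\beta(z) = 2 - 2e^{-z} - z - ze^{-z}$. Direct differentiation gives $\gamma(0) = 0$, $\gamma'(z) = e^{-z}(1+z) - 1$ with $\gamma'(0) = 0$, and $\gamma''(z) = -ze^{-z} < 0$ for $z > 0$. Strict concavity of $\gamma$ on $[0,\infty)$ together with the vanishing of $\gamma$ and $\gamma'$ at the origin forces $\gamma(z) < 0$ for $z > 0$, and division by $z > 0$ yields $\beta(z) < 0$ as required. The main place where a computational slip could occur is the algebraic simplification that follows Cayley--Hamilton and the substitution $z\varphi(z) = 1-e^{-z}$; once the factorisation $\bw = \beta(z)\bA\by^n$ is in hand, both parts of the lemma are immediate.
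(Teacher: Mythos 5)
Your proof is correct, and it takes a genuinely different route from the paper's. The paper decomposes $\by^n=\by^*+s^n\bby$ along $\ker(\bA)$ and the eigenvector $\bby=(1,-\tfrac1c)^\mathsf{T}$ of the nonzero eigenvalue $\lambda=-(ac+b)$, then reduces everything to the sign of a scalar factor $p(\Delta t\lambda)s^n$; you instead invoke Cayley--Hamilton ($\bA^2=-(ac+b)\bA$, valid since $\det\bA=0$ and $\tr\bA=-(ac+b)$) to collapse the matrix polynomial in \eqref{eq:w_MatrixForm} to $\bw(\by^n)=\beta(z)\bA\by^n$, and then read off both claims from the left null vector $(1,c)$ and the explicit sign of $(\bA\by^n)_1=c(by_2^n-ay_1^n)$. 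The two routes are equivalent in substance --- your $\gamma(z)=z\beta(z)$ is, up to the substitution $z=-\Delta t\lambda$ and an overall sign, the same exponential--polynomial whose monotonicity the paper checks --- but yours is arguably cleaner: it avoids introducing $s^n$ and $\bby$ altogether, and the factorization $\bw=\beta(z)\bA\by^n$ makes the proportionality between $w_1$ and $w_2$ immediate. Your calculus in the last step checks out: $\gamma(0)=\gamma'(0)=0$ and $\gamma''(z)=-ze^{-z}<0$ for $z>0$ do force $\beta(z)<0$, and I verified $\beta(z)=2\varphi(z)-1-e^{-z}$ numerically against \eqref{eq:w_MatrixForm}. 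One small caveat: what you actually derive is $w_1+cw_2=0$, i.e.\ $w_1=-c\,w_2$, whereas the statement prints $w_1=-\tfrac1c w_2$; note however that the paper's own computation (via $\bby=(1,-\tfrac1c)^\mathsf{T}$) also yields $w_1=-c\,w_2$, so this is a typo in the lemma rather than a gap in your argument, and it is harmless downstream since only the fact that $w_1$ and $w_2$ have opposite signs is used in \eqref{eq:psi_w1}.
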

\begin{proof}
	First note that $y^n_1=\tfrac{b}{a}y^n_2$ is equivalent to $\by^n\in \ker(\bA)$, and thus \eqref{eq:w_MatrixForm} yields $\bw(\by^n)=\bzero$.
	
	Next, we focus on finding conditions for $\by^n$ so that $w_1(\by^n)>0$. For this, it is worth mentioning that for every $\by^n>\bzero$\suggest{,} there exists a unique $\by^*\in \ker(\bA)\cap \R^2_{>0}$ satisfying  $\bn^\mathsf{T}\by^n=\bn^\mathsf{T}\by^*$ with $\bn=(1,c)^\mathsf{T}$. Hence, \suggest{since $\by^*>\bzero$ and $\bby=(1,-\tfrac1c)^\mathsf{T}$ are linearly independent,} there exists a unique $s^n\in\R$ such that $\by^n=\by^*+s^n\bby$. \suggest{Also note that $\bA\bby=\lambda\bby$ with $\lambda=-(ac+b)<0$ and} 
	\begin{equation}\label{eq:sn}
		s^n\begin{cases}
			>0, &y^n_1>\tfrac{b}{a}y^n_2,\\
			=0, &y^n_1=\tfrac{b}{a}y^n_2,\\
			<0, &y^n_1<\tfrac{b}{a}y^n_2.
		\end{cases}
	\end{equation}
	Thus, the linearity of $\bw$ and \eqref{eq:phi} lead to
	\begin{equation}\label{eq:w} 
		\begin{aligned}
			\bw(\by^n)&=\bw(\by^*)+\bw(s^n\bby)=\left(2 \varphi(-\Delta t\lambda)\lambda-2\lambda-\lambda^2\Delta t\varphi(-\Delta t\lambda)\right)s^n\bby\\
			&=\frac{1}{\Delta t}\left(2(1-e^{\Delta t\lambda}) -2\Delta t\lambda-\Delta t\lambda(1-e^{\Delta t\lambda})\right)s^n\bby\\
			&=\frac{1}{\Delta t}\left(2 -3\Delta t\lambda+ e^{\Delta t\lambda}(\Delta t\lambda-2)\right)s^n\bby.
		\end{aligned}
	\end{equation}
	Furthermore, introducing the function
	\[p(z)=-3z+2-e^z(2-z),\]
	we can rewrite \eqref{eq:w} to get
	\begin{equation}\label{eq:w_final}
		\bw(\by^n)=\frac{1}{\Delta t}p(\Delta t\lambda)s^n\bby.
	\end{equation}
	Now, the first derivative of $p$ satisfies
	\[p'(z)=-(3+e^z(1-z))<0\]
	for all $z\leq0.$ Hence, the function $p$ is strictly decreasing for $z\leq 0$ and satisfies $p(0)=0$ proving that $p(\lambda\Delta t)>0$ for all $\Delta t>0$. 
	Therefore, with \eqref{eq:sn} it follows that $w_1(\by^n)>0$ if $y_1^n>\frac{b}{a}y_2^n$. Similarly,  $w_1(\by^n)<0$ holds if $y_1^n<\frac{b}{a}y_2^n$. Finally, note that  \eqref{eq:w_final} implies $w_1(\by^n)=-\tfrac1cw_2(\by^n)$.
\end{proof} 

As a consequence of this lemma we simplify \eqref{eq:yGeco2} by introducing $H\colon \R^2_{>0}\to \R_{>0}$ with 
\begin{equation}\label{eq:psi_w1}
	\begin{aligned}
		H(\bx)&=\Delta t \varphi\left(\Delta t\left(\dfrac{w_1^+(\bx)}{x_1}+\dfrac{w_2^+(\bx)}{x_2}\right)\right)  =\begin{cases}\widetilde H_1(\bx),& x_1>\frac{b}{a} x_2,\\
			\Delta t,& x_1= \frac{b}{a} x_2,\\
			\widetilde H_2(\bx),& x_1< \frac{b}{a} x_2,
		\end{cases}\\
		\widetilde H_i(\bx)&=\dfrac{1-e^{-\Delta t \tfrac{w_i(\bx)}{x_i}}}{\tfrac{w_i(\bx)}{x_i}},\quad i=1,2
	\end{aligned}
\end{equation}
and point out that $H$ is continuous, since $\varphi$ from \eqref{eq:phi} is in $ \mathcal C^1$ and $\bw\in \mathcal C^\infty$. As a result of Lemma \ref{Lem:w}, we even know that \[\widetilde H_i\in \mathcal C^\infty(\R^2_{>0}\setminus\ker(\bA))\] for $i=1,2$.

The map $\bg$ defining the iterates of the GeCo2 scheme when applied to \eqref{PDS_test} is given by \eqref{eq:yGeco2} and can be written as
\begin{equation*}
	\begin{aligned}
		\bg(\bx)&=\bx+ \dfrac{1}{2} H(\bx)  \bA\left(2\bx+\Delta t\varphi(\Delta \suggest{\tr(\bS^-)})\bA \bx\right).
	\end{aligned}
\end{equation*}
Introducing $\bG(\bx)=\bA\bx H(\bx)$ we obtain
\begin{equation}\label{eq:gGeCo2}
	\bg(\bx)=\bx+\bG(\bx)+\frac12\Delta t\varphi(\Delta t\suggest{\tr(\bS^-)})\bA\bG(\bx).
\end{equation}
The following theorem uses this representation of $\bg$ to analyze the stability properties of GeCo2.
\begin{thm}\label{Thm:geco2}
	Let $\bg$, given by \eqref{eq:gGeCo2}, be the generating function of the GeCo2 iterates $\by^n$ when applied to \eqref{PDS_test}. Further, let $\by^*>\bzero$ be a steady state solution of \eqref{PDS_test}. 
	Then the map $\bg$ is in $\mathcal C^1(\mathcal D)$ and has Lipschitz continuous derivatives on a sufficiently small neighborhood $\mathcal D$ of $\by^*$.
	Moreover, the stability function of the GeCo2 scheme reads
	\begin{equation}\label{eq:stabfun_GeCo2}
		R(z)=\suggest{1+z+\frac12 z^2\varphi(\Delta t\tr(\bS^-))}.
	\end{equation}
	If $\lvert R(-\Delta t(ac+b))\rvert <1$, then $\by^*$ is stable and there exists a $\delta>0$ such that $(1,c)\by^0=(1,c)\by^*$ and $\norm{\by^0-\by^*}<\delta$ imply $\lim_{n\to \infty}\by^n=\by^*$.  If $\lvert R(-\Delta t(ac+b))\rvert >1$, then $\by^*$ is an unstable fixed point of GeCo2.
\end{thm}
\begin{proof}
	We demonstrate that all assumptions of Theorem \ref{Thm_MPRK_stabil} and Theorem \ref{Thm:_Asym_und_Instabil} are fulfilled.
	
	From part \ref{it:alemdiff} of Lemma \ref{Lem:diff} \suggest{from the appendix} with $\mathbf \Phi(\bx)=\bA\bx$ and $\Psi(\bx)=H(\bx)$, it follows that the partial derivatives of $\bG(\bx)=\bA\bx H(\bx)$ on $\ker(\bA)$ exist and that $\bD\bG(\bx_0)=\Psi(\bx_0)\bA=\Delta t\bA$ holds for $i=1,2$ and all $\bx_0\in \ker(\bA)$.
	As a result of \eqref{eq:gGeCo2} we obtain
	\begin{equation}\label{eq:Dg(y*)_Geco2}
		\bD\bg(\by^*)=\bI+\Delta t\bA+\frac{1}{2}(\Delta t)^2\varphi(\Delta t\suggest{\tr(\bS^-)})\bA^2
	\end{equation}
	and the eigenvalues are given by $1$ and $R(-\Delta t(ac+b))$, where
	\begin{equation*}
		R(z)=1+z+\frac12 z^2\varphi(-\suggest{\Delta t\tr(\bS^-)}).
	\end{equation*}
	In total, we can write
	\begin{equation}\label{eq:DGGeCo2}
		\bD\bG(\bx)=\bB(\bx)+\bC(\bx)
	\end{equation}
	with
	\[\bB(\bx)=\bA H(\bx)\qta \bC(\bx)=\bA\bx\cdot\begin{cases}\nabla \widetilde H_1(\bx),& x_1>\frac{b}{a}x_2,\\
		\bzero^\mathsf{T} ,& x_1=\frac{b}{a}x_2,\\
		\nabla \widetilde H_2(\bx),& x_1<\frac{b}{a}x_2. \end{cases} \] 
	Note that, if each entry of $\bB\suggest{=(b_{ij})_{i,j=1,2}}$ and $\bC\suggest{=(c_{ij})_{i,j=1,2}}$ satisfies the assumptions of Lemma \ref{Lem:locallyLip} \suggest{from the appendix}, we can conclude that $\bG\in \mathcal C^1(\mathcal D)$ in a sufficiently small neighborhood $\mathcal D$ of $\by^*$ and that the first derivatives are Lipschitz continuous on $\mathcal{D}$. As a direct consequence of \eqref{eq:gGeCo2}, the same would then hold true for $\bg$.
	
	Now we show that the entries \suggest{$b_{ij}$ and $c_{ij}$ of of the matrices} $\bB$ and $\bC$ satisfy the assumptions of Lemma \ref{Lem:locallyLip} \suggest{from the appendix}, that is 
	\begin{enumerate}
		\item $\suggest{b_{ij}}$ and  $\suggest{c_{ij}}$ are continuous on $\R^2_{>0}$,
		\item  $\suggest{b_{ij}}$ and  $\suggest{c_{ij}}$ are constant on $\ker(\bA)$,
		\item  $\suggest{b_{ij}}$ and  $\suggest{c_{ij}}$ are in $\mathcal C^1$ on $\R^2_{>0}\setminus\ker(\bA)$ and
		\item $\lim_{\bx\to\bx_0}\nabla \suggest{b_{ij}(\bx)}$ as well as $\lim_{\bx\to\bx_0}\nabla \suggest{c_{ij}(\bx)}$ exist for all $\bx_0\in \ker(\bA)\cap \R^2_{>0}$
	\end{enumerate}
	for $i,j\in\{1,2\}$. First, note that $\bB$ and $\bC$ are constant on $\ker(\bA)$, and due to $\widetilde H_k\in \mathcal C^2$ for $k=1,2$\suggest{,}  we find that each entry of the two matrices is continuously differentiable on $\R^2_{>0}\setminus \ker(\bA)$. Even more, since $H$ is continuous we know that $\suggest{b_{ij}}\in \mathcal C(\R^2_{>0})$ for  $i,j\in\{1,2\}$. 
	
	We want to point out that if $\lim_{\bx\to\bx_0}\nabla \widetilde H_k(\bx)$ exists, this proves the continuity of $\suggest{c_{ij}}$ as well as that $\lim_{\bx\to\bx_0}\nabla\suggest{b_{ij}(\bx)}$ exists for all $i,j\in \{1,2\}$.
	Furthermore, for $\bx\notin \ker(\bA)$ we find
	\[\nabla\suggest{c_{ij}(\bx)}= \nabla(\bA\bx\nabla \widetilde H_k(\bx))_{ij}=\nabla((\bA\bx)_i\tfrac{\partial}{\partial x_j}\widetilde H_k(\bx))=(\bA\be_i)^\mathsf{T} \tfrac{\partial}{\partial x_j}\widetilde H_k(\bx)+(\bA\bx)_i \nabla (\tfrac{\partial}{\partial x_j}\widetilde H_k(\bx))\]
	and see that $\lim_{\bx\to\bx_0}\nabla \suggest{c_{ij}(\bx)}$ exists if  $\lim_{\bx\to\bx_0}\nabla \widetilde H_k(\bx)$ as well as $\lim_{\bx\to\bx_0}\nabla (\tfrac{\partial}{\partial x_j}\widetilde H_k(\bx))$ exist for $i,j,k\in\{1,2\}$. 
	To see that both limits exist for $\bx_0\in \ker(\bA)\cap \R^2_{>0}$, we introduce $\Phi(z)=\frac{1-e^{-\Delta tz}}{z}$, so that $ \widetilde H_k(\bx)=\Phi(\tfrac{w_k(\bx)}{x_k})$. 
	Hence, we have  $\Phi\in \mathcal C^2(\R\setminus\{0\})$ and
	\begin{equation}
		\begin{aligned}\label{eq:NablaH_i(x)}
			\nabla\widetilde H_k(\bx)=&\Phi'(\tfrac{w_k(\bx)}{x_k})\left(\frac{\nabla w_k(\bx)}{x_k}+w_k(\bx)\nabla\left(\frac{1}{x_k}\right) \right),\\
			\nabla(\tfrac{\partial}{\partial x_j}\widetilde H_k(\bx)) =&\Phi''(\tfrac{w_k(\bx)}{x_k})\left(\frac{\nabla w_k(\bx)}{x_k}+w_k(\bx)\nabla \left(\frac{1}{x_k}\right) \right)\left(\frac{\tfrac{\partial}{\partial x_j} w_k(\bx)}{x_k}+w_k(\bx)\tfrac{\partial}{\partial x_j}\left(\frac{1}{x_k}\right) \right)\\
			&+\Phi'(\tfrac{w_k(\bx)}{x_k})\nabla\left(\frac{\tfrac{\partial}{\partial x_j} w_k(\bx)}{x_k}+w_k(\bx)\tfrac{\partial}{\partial x_j}\left(\frac{1}{x_k}\right) \right).
		\end{aligned}
	\end{equation}
	As $\lim_{\bx\to\bx_0}\tfrac{w_k(\bx)}{x_k}=0$ for $\bx_0\in \ker(\bA)\cap \R^2_{>0}$, see \eqref{eq:w_MatrixForm}, we are interested in the limits of the first two derivatives of $\Phi$ at $z=0$. By l'Hospital's rule, a straightforward calculation yields
	\begin{equation}
		\lim_{z\to 0}\Phi'(z)=-\frac{(\Delta t)^2}{2} \qta \lim_{z\to 0}\Phi''(z)=\frac{(\Delta t)^3}{3}.\label{eq:limitPhi'}
	\end{equation}
	In addition, due to \eqref{eq:w_MatrixForm}, we know that $\nabla w_k$ is a constant function for $\suggest{k}=1,2$, which means that \[\nabla\left(\frac{\frac{\partial}{\partial x_j} w_k(\bx)}{x_k}+w_k(\bx)\frac{\partial}{\partial x_j}\left(\frac{1}{x_k}\right) \right)=\frac{\partial}{\partial x_j} w_k(\bx)\nabla\left(\frac{1}{x_k}\right)+\nabla w_k(\bx)\frac{\partial}{\partial x_j}\left(\frac{1}{x_k}\right)+ w_k(\bx)\nabla\left(\frac{\partial}{\partial x_j}\left(\frac{1}{x_k}\right)\right).\]
	It thus follows from \eqref{eq:NablaH_i(x)} and \eqref{eq:limitPhi'} that  $\lim_{\bx\to\bx_0}\nabla \widetilde H_k(\bx)$ as well as $\lim_{\bx\to\bx_0}\nabla (\tfrac{\partial}{\partial x_j}\widetilde H_k(\bx))$ exist for all $i,j\in\{1,2\}$ and each $\bx_0\in \ker(\bA)\cap \R^2_{>0}$. 
\end{proof}
\begin{rem}\label{rem:geco2}
	A numerical calculation shows that the stability function $R$ from \eqref{eq:stabfun_GeCo2} \suggest{with $\Delta t\tr(\bS^-)=-z$} satisfies $\abs{R(z)}<1$ for $z\in (z^*,0]$ with $-3.9924\leq z^*\leq -3.9923$.  Hence, the stability region of GeCo2 \suggest{when applied to \eqref{PDS_test}} is almost twice as big as the one of the underlying Heun scheme which is $(-2,0]$. 
\end{rem}

\section{Generalized BBKS Schemes}
\label{sec:BBKS}
The gBBKS schemes were developed in \cite{BBKS2007,BRBM2008, MR4109346} and represent a class of schemes that are unconditionally positive while preserving all linear invariants of the underlying ordinary differential equation $\by'=\bff(t,\by)\in \R^N$. The first order gBBKS schemes (gBBKS1) can be written as
\begin{equation}\label{eq:gBBKS1Intro}
	y^{n+1}_i=y^n_i + \Delta t f_i(t,\by^n) \Bigg(\prod_{m\in M^{n}} \frac{y^{n+1}_m}{\sigma^{n}_m}\Bigg)^{\ns r^{n}},\quad i=1,\dotsc,N,
\end{equation}
where $r^n,\sigma_m^n>0$ are \suggest{free parameters, but need to be chosen} independent\suggest{ly} of $\by^{n+1}$ and
\begin{equation*}
	M^{n}=\{ m\in \{1,\dotsc,N\} \mid  f_m(t,\by^n)<0\}.
\end{equation*}
\suggest{For instance, the BBKS1 scheme from \cite{BBKS2007,MR4109346} is given by setting $\sigma^n_m=y^n_m$ and $r^n=1$.} The second order gBBKS schemes (gBBKS2($\alpha$)) read
\
\begin{equation}\label{eq:gBBKS2Intro}
	\begin{aligned}
		\mathllap{y_i^{(2)}} &= y_i^n+\alpha \Delta t f_i(t^n,\by^{n}) \Bigg(\prod_{j\in J^{n}} \frac{y^{(2)}_j}{\pi^{n}_j}\Bigg)^{\ns q^{n}},\\
		\mathllap{y_i^{n+1}} &= y_i^n+\Delta t \left(\Big(1-\frac{1}{2\alpha}\Big) f_i(t^n,\by^{n})+ \frac{1}{2\alpha}f_i(t^{n}+\alpha \Delta t,\by^{(2)})\right)\Bigg(\prod_{m\in M^{n}} \frac{y^{n+1}_m}{\sigma^{n}_m}\Bigg)^{\ns r^{n}},
	\end{aligned}
\end{equation}
for $i=1,\dots,N$ with $\alpha\geq \frac12$ and $\pi_j^n,q^n>0$ being \suggest{free parameters chosen} independent\suggest{ly} of $\by^{(2)}$ while $\sigma_m^n,r^n>0$ are \suggest{chosen to be} independent of $\by^{n+1}$. \suggest{To give an example, the BBKS2(1) scheme from \cite{BRBM2008, MR4109346} uses $\pi^n_m=\sigma^n_m=y^n_m$ and $q^n=r^n=1$.} Moreover, the sets $J^n$ and $M^n$ are given by
\begin{subequations}
	\begin{align*}
		J^{n}&=\left\{ j\in \{1,\ldots,N\}\mid f_j(t^n, \by^{n})<0\right\},\\
		M^{n}&=\left\{ m\in \{1,\ldots,N\}\;\Big |\; \Big(1-\frac{1}{2\alpha}\Big) f_m(t^n,\by^{n})+ \frac{1}{2\alpha}f_m(t^{n}+\alpha \Delta t,\by^{(2)})<0\right\}.
	\end{align*}
\end{subequations}
\suggest{We want to note that $M^n$ always refers to the last step of the corresponding method.}



\subsection{Stability of first order gBBKS Schemes}\label{subsec:StabBBKS}
When applied to the system of differential equations \eqref{PDS_test}, i.\,e.\ $\by'=\bA\by$, the first order gBBKS schemes \eqref{eq:gBBKS1Intro} are given by 
\begin{equation}\label{eq:gBBKS11}
	y^{n+1}_i=y^n_i + \Delta t (\bA\by^n)_i \Bigg(\prod_{m\in M^{n}} \frac{y^{n+1}_m}{\sigma^{n}_m}\Bigg)^{\ns r^{n}},\quad i=1,2,
\end{equation}
where
\begin{equation}
	M^{n}=\{ m\in \{1,2\}\mid  (\bA\by^n)_m<0\}.\label{eq:defM}
\end{equation}

In this section we investigate the stability properties of gBBKS schemes by first proving that the assumptions of Theorem \ref{Thm_MPRK_stabil} are met.
The existence and uniqueness of a function $\bg$ generating the iterates from \eqref{eq:gBBKS11}, i.\,e.\ $\by^{n+1}=\bg(\by^n)$, is \suggest{already} proven in \cite{MR4109346}. Thereby, $\bg$ is given by the unique solution to some equation 
\[\bF(\bx,\bg(\bx))=\bzero, \]
where $\bF\colon \R^2_{>0}\times  \R^2_{>0}\to \R^2$ with $(\bx,\by)\mapsto\bF(\bx,\by)$.  In the following we denote by 
\begin{equation*}
	\begin{aligned}
		\bD_\bx\bF(\bx,\by)&=\frac{\partial \bF}{\partial \bx}(\bx,\by),\\
		\bD_\by\bF(\bx,\by)&=\frac{\partial \bF}{\partial \by}(\bx,\by)
	\end{aligned}
\end{equation*}
the Jacobians of $\bF$ with respect to $\bx$ and $\by$, respectively.

An intuitive way of proving $\bg\in \mathcal C^1(\mathcal D)$, where $\mathcal D$ is a neighborhood of a fixed point $\by^*$ of $\bg$, is to use the implicit function theorem. Unfortunately, we will see in the following that in our case $\bF$ is not differentiable on $\mathcal D\times \mathcal D$. Since the existence and uniqueness of the map $\bg$ is already known here, the differentiability of $\bg$ can be obtained by weaker assumptions on $\bF$ as the next theorem states.
\begin{thm}[{\cite[Theorem 11.1]{LS14}}]\label{Thm:gDiff}
	Let $D\tm \R^2$ be open and $\bg\colon D\to D$ be continuous in $\bx_0$. Furthermore, let $\bF\colon D\times D\to \R^2$ with $(\bx,\by)\mapsto\bF(\bx,\by)$ be differentiable in $(\bx_0,\bg(\bx_0))^\mathsf{T}$ and $\bD_\by\bF(\bx_0,\bg(\bx_0))$ be invertible. Suppose that $\bF(\bx,\bg(\bx))=\bzero$ for all $\bx\in D$, then also $\bg$ is differentiable in $\bx_0$ and 
	\begin{equation*}
		\bD\bg(\bx_0)=-(\bD_\by\bF(\bx_0,\bg(\bx_0)))^{-1}\bD_\bx\bF(\bx_0,\bg(\bx_0)).
	\end{equation*} 
\end{thm}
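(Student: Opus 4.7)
The plan is to exploit the hypothesis that $\bF$ is differentiable at the single point $(\bx_0,\bg(\bx_0))$ and then compose its first-order expansion with the curve $\bx\mapsto(\bx,\bg(\bx))$, which lies in the zero-set of $\bF$. Writing $\by_0=\bg(\bx_0)$, differentiability gives
\begin{equation*}
\bF(\bx,\by)=\bD_\bx\bF(\bx_0,\by_0)(\bx-\bx_0)+\bD_\by\bF(\bx_0,\by_0)(\by-\by_0)+R(\bx,\by),
\end{equation*}
with $R(\bx,\by)=o\bigl(\|(\bx-\bx_0,\by-\by_0)\|\bigr)$ as $(\bx,\by)\to(\bx_0,\by_0)$. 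Setting $\by=\bg(\bx)$, using $\bF(\bx,\bg(\bx))=\bzero$, and inverting $\bD_\by\bF(\bx_0,\by_0)$, I obtain the candidate identity
\begin{equation*}
\bg(\bx)-\by_0=-\bD_\by\bF(\bx_0,\by_0)^{-1}\bD_\bx\bF(\bx_0,\by_0)(\bx-\bx_0)-\bD_\by\bF(\bx_0,\by_0)^{-1}R(\bx,\bg(\bx)),
\end{equation*}
so the theorem reduces to showing $R(\bx,\bg(\bx))=o(\|\bx-\bx_0\|)$ as $\bx\to\bx_0$.

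This last estimate is the main obstacle, because a priori $R$ is small only in the joint norm $\|(\bx-\bx_0,\bg(\bx)-\by_0)\|$, and we have no advance control on how fast $\bg(\bx)$ tends to $\by_0$. I would close this gap by a short bootstrap that only uses continuity of $\bg$ at $\bx_0$. Set $c=\|\bD_\by\bF(\bx_0,\by_0)^{-1}\|$ and fix $\varepsilon\in(0,1/(2c))$. Choose a neighborhood $U$ of $(\bx_0,\by_0)$ on which $\|R(\bx,\by)\|\le\varepsilon\|(\bx-\bx_0,\by-\by_0)\|$, and use continuity of $\bg$ to find $\delta>0$ with $(\bx,\bg(\bx))\in U$ whenever $\|\bx-\bx_0\|<\delta$. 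Applying the triangle inequality to the identity above yields, for some constant $C_1>0$,
\begin{equation*}
\|\bg(\bx)-\by_0\|\le C_1\|\bx-\bx_0\|+c\varepsilon\bigl(\|\bx-\bx_0\|+\|\bg(\bx)-\by_0\|\bigr),
\end{equation*}
and the choice of $\varepsilon$ lets me absorb the $\bg$-term on the left, giving a local Lipschitz-type bound $\|\bg(\bx)-\by_0\|\le C\|\bx-\bx_0\|$.

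With this bound in hand, the joint norm satisfies $\|(\bx-\bx_0,\bg(\bx)-\by_0)\|\le(1+C)\|\bx-\bx_0\|$, so the $o$-property of $R$ immediately gives $R(\bx,\bg(\bx))/\|\bx-\bx_0\|\to \bzero$ as $\bx\to\bx_0$. Substituting this back into the candidate identity yields differentiability of $\bg$ at $\bx_0$ together with the required formula $\bD\bg(\bx_0)=-\bD_\by\bF(\bx_0,\bg(\bx_0))^{-1}\bD_\bx\bF(\bx_0,\bg(\bx_0))$. The bootstrap is the only genuinely technical step; everything else is direct substitution and linear algebra.
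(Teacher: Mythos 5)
Your argument is correct. Note, however, that the paper does not prove this statement at all: it is quoted verbatim from the reference \cite[Theorem~11.1]{LS14}, so there is no internal proof to compare against. Your write-up is the standard argument for this kind of result — expand $\bF$ to first order at $(\bx_0,\bg(\bx_0))$, substitute the zero-set relation $\bF(\bx,\bg(\bx))=\bzero$, and use a bootstrap (absorbing the $c\varepsilon\,\lVert\bg(\bx)-\by_0\rVert$ term for $\varepsilon<1/(2c)$) to upgrade continuity of $\bg$ at $\bx_0$ to a local Lipschitz bound, after which the remainder is $o(\lVert\bx-\bx_0\rVert)$ and the derivative formula follows. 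The only cosmetic omission is that your expansion silently drops the term $\bF(\bx_0,\bg(\bx_0))$, which should be noted to vanish because $\bx_0\in D$ and $\bF(\bx,\bg(\bx))=\bzero$ on all of $D$; with that remark added, the proof is complete and self-contained.
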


%
%

Before we formulate the stability theorem for \suggest{gBBKS1}, we introduce some assumptions on the exponent $r^n$ as well as $\sigma_m^n$ from \eqref{eq:gBBKS11}.
In particular, $r^n>0$ and $\sigma_m^n>0$ may depend on $\by^n$ and hence will be interpreted as functions $r^n=r(\by^n)$ and $\sigma_m^n=\sigma_m(\by^n)$. For the analysis of the gBBKS1 schemes we do not \suggest{further} specify the expressions for \suggest{the functions} $r$ or $\sigma_m$. Instead, we assume some reasonable properties such as that $r$ and $\sigma_m$ are positive for all $\Delta t>0$. Furthermore, we require $\sigma_m(\bv)=v_m$ whenever $\bv\in\ker(\bA)\cap \R^2_{>0}$ which is in agreement with the literature \cite{MR4109346,BBKS2007,BRBM2008}. To guarantee the regularity of the map generating the iterates $\by^n$, we also assume that $r,\sigma_1$ and $\sigma_2$ are in $\mathcal C^2$. In total, we prove the following theorem.

\begin{thm}\label{Thm:gBBKS1}
	Let  $\by^*>\bzero$ be a steady state solution of \eqref{PDS_test}, and $\sigma_1,\sigma_2,r\in \mathcal{C}^2(\R^2_{>0},\R_{>0})$. Further, let $\mathcal D$ be a sufficiently small neighborhood of $\by^*$ and suppose that $\bm \sigma(\bv)=\bv$ for all $\bv\in C=\ker(\bA)\cap \mathcal D$. Then the map $\bg$ generating the iterates of the gBBKS1 family\suggest{,} implicitly given by \eqref{eq:gBBKS11}\suggest{,} satisfies $\bg(\bv)=\bv$ for all steady states $\bv\in  C$ and the following statements hold.
	\begin{enumerate}
		\item\label{it:aThmgbbks1} The map $\bg$ satisfies  $\bg\in \mathcal{C}^1(\mathcal D)$ and $\bD\bg(\by^*)=\bI+\Delta t\bA$.
		\item\label{it:cThmgbbks1}  The first derivatives of $\bg$ are bounded and Lipschitz continuous on $\mathcal D$.
	\end{enumerate}
\end{thm}
\begin{proof}
	Before we start the proof of \ref{it:aThmgbbks1}, we make some preparatory considerations.
	
	Since $(\bA\by^n)_1=c(-ay_1^n+by_2^n)$ and $c(\bA\by^n)_2=-(\bA\by^n)_1$ we find
	\begin{equation}\label{eq:MgBBKS1}
		M^n=\begin{cases}
			\{1\}, & y_1^n>\frac{b}{a}y^n_2,\\
			\emptyset, & y_1^n=\frac{b}{a}y^n_2,\\
			\{2\}, & y_1^n<\frac{b}{a}y^n_2.
		\end{cases}
	\end{equation}
	Hence, when applied to \eqref{PDS_test}, \eqref{eq:IC} the scheme \eqref{eq:gBBKS11} turns into
	\begin{equation}\label{eq:gBBKS11_Test_Prob}
		\by^{n+1}=\by^n + \Delta t \bA \by^n\begin{cases}
			\left(\frac{y^{n+1}_1}{\sigma^{n}_1}\right)^{r^n}, & y_1^n>\frac{b}{a}y^n_2,\\
			1, & y_1^n=\frac{b}{a}y^n_2,\\
			\left(\frac{y^{n+1}_2}{\sigma^{n}_2}\right)^{r^n}, & y_1^n<\frac{b}{a}y^n_2,
		\end{cases}
	\end{equation}
	where $(\frac{b}{a}y_2^n,y_2^n)^\mathsf{T}\in \ker(\bA)$ is a steady state solution of \eqref{PDS_test}.
	
	Recall that the map $\bg$ generates the iterates $\by^n$, that is $\by^{n+1}=\bg(\by^n)$. Hence, inserting $\by^n=\bv\in C$ into equation \eqref{eq:gBBKS11_Test_Prob} yields $\by^{n+1}=\bg(\bv)$ on the left and $\bv$ on the right, and thus $\bg(\bv)=\bv$. Furthermore, we introduce the function $\bF$ defined by
	\begin{equation}\label{eq:F(x).gBBKS1}
		\begin{aligned}
			\bF&\colon \R^2_{>0}\times \R^2_{>0}\to \R^2,\\ \bF(\bx,\by)&=\by-\bx-\Delta t\bA\bx H(\bx,\by),\\
			H(\bx,\by)&=\begin{cases}
				\widetilde H_1(\bx,\by), & x_1>\frac{b}{a}x_2,\\
				1, & x_1=\frac{b}{a}x_2,\\
				\widetilde H_2(\bx,\by), & x_1<\frac{b}{a}x_2,
			\end{cases} \\
			\widetilde H_i(\bx,\by)&=\left(\frac{y_i}{\sigma_i(\bx)}\right)^{r(\bx)}, \quad i=1,2,
		\end{aligned}
	\end{equation}
	which satisfies $\bF(\bx,\bg(\bx))=\bzero$ for all $\bx>\bzero$.

	\begin{enumerate}
		\item We first show that $\bF$ is not differentiable on $\mathcal D\times \mathcal D$. For this, we choose $\bx_0\in C$ as well as $\by_0>\bzero$ with $\frac{(\by_0)_1}{(\bx_0)_1}\neq\frac{(\by_0)_2}{(\bx_0)_2}$ and define $\Psi(\bx)=H(\bx,\by_0)$. As a result of $\bm \sigma(\bx_0)=\bx_0$ it follows that \[\lim_{h\searrow 0}\Psi(\bx_0+h\be_1)=\lim_{h\searrow 0} \widetilde{H}_1(\bx_0+h\be_1,\by_0)=\left( \frac{(\by_0)_1}{\sigma_1(\bx_0)}\right)^{r(\bx_0)}=\left( \frac{(\by_0)_1}{(\bx_0)_1}\right)^{r(\bx_0)}.\]
		Analogously, we obtain
		\[\lim_{h\nearrow 0}\Psi(\bx_0+h\be_1)=\lim_{h\searrow 0} \widetilde{H}_2(\bx_0+h\be_1,\by_0)=\left( \frac{(\by_0)_2}{\sigma_2(\bx_0)}\right)^{r(\bx_0)}=\left( \frac{(\by_0)_2}{(\bx_0)_2}\right)^{r(\bx_0)},\]
		which  shows that $\Psi(\bx_0+h\be_1)$ possesses several accumulation points as $h\to 0$, and hence, part \ref{it:blemdiff} of Lemma \ref{Lem:diff} \suggest{from the appendix} with $\mathbf \Phi(\bx)=\bA\bx$ implies that the $1$st partial derivative of $\bF$ does not exist.
		
		As mentioned above\suggest{,} this means that we can not apply the implicit function theorem to $\bF$ on $\mathcal D\times \mathcal D$ in order to prove that $\bg\in \mathcal C^1(\mathcal D)$. Nevertheless, $\bF$ is differentiable in $(\bx,\by)\in E= \mathcal D\setminus\ker(\bA)\times \mathcal D$, since in this case we have
		\begin{equation}\label{eq:F(x,y):i=1,2}
			\bF(\bx,\by)=\by-\bx-\Delta t\bA\bx\left(\frac{y_i}{\sigma_i(\bx)}\right)^{r(\bx)}, \quad i=\begin{cases}
				1,& x_1>\frac{b}{a}x_2,\\
				2,& x_1<\frac{b}{a}x_2
			\end{cases} 
		\end{equation}
		with $\sigma_1,\sigma_2,r\in \mathcal{C}^2(\R^2_{>0},\R_{>0})$.
		In order to show that $\bg\in \mathcal C^1(\mathcal D\setminus \ker(\bA))$, we first show that the inverse of $\bD_\by\bF(\bx,\bg(\bx))$ exists for all $\bx\in \mathcal D\setminus \ker(\bA)$. It is straightforward to verify that 
		\begin{equation*}
			\bD_\by\bF(\bx,\bg(\bx))=\bI-\Delta t\bA \bx\nabla_\by\widetilde H_i(\bx,\bg(\bx))=\bI-\Delta t\bA \bx\be_i^\mathsf{T}\frac{r(\bx)}{\sigma_i(\bx)}\left(\frac{g_i(\bx)}{\sigma_i(\bx)}\right)^{r(\bx)-1}
		\end{equation*}
		holds for $\bx\notin C$.
		Introducing the vectors
		\begin{equation}
			\bv^{(i)}(\bx)=\Delta t \bA\bx \frac{r(\bx)}{\sigma_i(\bx)}\left(\frac{g_i(\bx)}{\sigma_i(\bx)}\right)^{r(\bx)-1}=\Delta t \bA\bx \frac{r(\bx)}{g_i(\bx)}\left(\frac{g_i(\bx)}{\sigma_i(\bx)}\right)^{r(\bx)}\label{eq:v^(i)}
		\end{equation}
		for $i$ from \eqref{eq:F(x,y):i=1,2}, we can write the Jacobian in the compact form
		\begin{equation}\label{eq:DyFtilde=I-ve_i}
			\bD_\by\bF(\bx,\bg(\bx))=\bI- \bv^{(i)}(\bx)\be_i^\mathsf{T}.
		\end{equation}
		Note that due to \eqref{eq:DyFtilde=I-ve_i},  the Jacobian of $\bF$ with respect to $\by$ is a triangular matrix, depending on $i$ from \eqref{eq:F(x,y):i=1,2}. Nevertheless, in either case  we find
		\begin{equation}\label{eq:det()=1-v}
			\det(\bD_\by\bF(\bx,\bg(\bx)))=1-v_i^{(i)}(\bx).
		\end{equation}
		Now, we know that $(\bA\bx)_i< 0$ for $i$ form \eqref{eq:F(x,y):i=1,2} by construction of the gBBKS schemes, which in particular means that
		\begin{equation}\label{eq:v_ineq1}
			v^{(i)}_i(\bx)\neq 1.
		\end{equation} As a result \suggest{of \eqref{eq:det()=1-v}}, the inverse of $\bD_\by\bF(\bx,\bg(\bx))$ exists.
		
		Considering a zero $(\bx_0,\bg(\bx_0))\in E$ of $\bF$, the implicit function theorem thus provides the existence of a unique $\mathcal C^1$-map $\widetilde{\bg}$ satisfying $\bF(\bx,\widetilde{\bg}(\bx))=\bzero$ in a sufficiently small neighborhood of $(\bx_0,\bg(\bx_0))$. Since $\bg$ and $\widetilde{\bg}$ are unique, we find $\bg=\widetilde{\bg}$, and since $\bx_0$ was arbitrary, we have shown that $\bg\in \mathcal C^1$ on $\mathcal D\setminus \ker(\bA)$, and in particular
		\begin{equation}\label{eq:Dg(x)}
			\bD\bg(\bx)=-(\bD_\by\bF(\bx,\bg(\bx)))^{-1}\bD_\bx\bF(\bx,\bg(\bx))
		\end{equation}
		for $\bx\in\mathcal D\setminus \ker(\bA)$. It thus remains to show that $\bg\colon D\to D$ is also differentiable in any $\bx\in \ker(\bA)\cap \mathcal D=C$ and that the first derivatives are continuous in any $\bx\in C$. 
		
		To prove the differentiability of $\bg$ in any $\bx\in C$ we make use of Theorem \ref{Thm:gDiff}, and hence we have to prove the following.
		\begin{enumerate}[label=\arabic*.]
			\item The map $\bg$ is continuous in any $\bx\in C$.
			\item The map $\bF$ is differentiable in $(\bx,\bg(\bx))$ for all $\bx\in C$.
			\item The Jacobian $\bD_\by\bF(\bx,\bg(\bx))$ with respect to $\by$ is invertible for all $\bx\in C$.
		\end{enumerate}
		If we have shown these properties, then Theorem \ref{Thm:gDiff} together with the considerations above implies that \eqref{eq:Dg(x)} even holds for all $\bx\in \mathcal D$. 		
		
		We first prove that $\bg$ is continuous on $C$. Since gBBKS schemes conserve all linear invariants, we find from \eqref{PDS_test} that
		\begin{equation}\label{eq:gnorm}
			\min\{1,c\}\Vert \bg(\bx)\Vert_1\leq  g_1(\bx)+cg_2(\bx)=x_1+cx_2\leq \max\{1,c\} \Vert \bx\Vert_1.
		\end{equation}
		Now, $\Vert\bx\Vert_1$ is bounded on a sufficiently small neighborhood $\mathcal D$ of $\by^*$ as we can make sure that the closure of $\mathcal D$ is contained in the domain of $\bg$. And since norms on $\R^2$ are equivalent, we \suggest{even} find \suggest{from \eqref{eq:gnorm}} that $\Vert\bg\Vert$ is bounded on $C$. As a result\suggest{,} $H(\cdot,\bg(\cdot))$ is bounded on $\mathcal D$ \suggest{since} the reciprocal of $\bm \sigma\in \mathcal C^2$ as well as $r\in \mathcal C^2$ are bounded on a sufficiently small $\mathcal D$. It thus follows that $\bA\bx H(\bx,\bg(\bx))$ tends to $\bzero$ as $\bx\to \by^*$. From \eqref{eq:F(x).gBBKS1} with $\by=\bg(\bx)$ we therefore obtain \[\lim_{\bx\to\by^*}\bg(\bx)=\by^*=\bg(\by^*), \]
		which means that $\bg\colon D\to D$ is continuous in all $\bx\in C$.
		
		Next\suggest{,} we show that $\bF$ is differentiable in $(\bx,\bg(\bx))$ for all $\bx\in C$.
		For this consider a $\bx_0\in C$ and set $\by_0=\bg(\bx_0)$.
		Note that $\Psi=H(\cdot,\by_0)$ is continuous in $\bx_0\in C$ with $\Psi(\bx_0)=1$ since $\bg(\bx_0)=\bm\sigma(\bx_0)=\bx_0$.
		
		In this case\suggest{,} part \ref{it:alemdiff} of Lemma \ref{Lem:diff} \suggest{from the appendix} with $\mathbf \Phi(\bx)=\bA\bx$ yields
		\begin{equation}\label{eq:DxF=-I-dtA}
			\bD_\bx\bF(\bx_0,\bg(\bx_0))=-\bI-\Delta t\bA.
		\end{equation}
		Furthermore, as $\bA\bx\widetilde H_i(\bx,\by)=\bzero$ for all $\bx\in C$ and $\by\in\R^2_{>0}$, it follows immediately that 
		\begin{equation}\label{eq:DyF=I}
			\bD_\by\bF(\bx_0,\bg(\bx_0))=\bI,
		\end{equation}
		which shows that $\bF$ is partially differentiable in $(\bx_0,\bg(\bx_0))$. To prove that $\bF$ is differentiable in $(\bx_0,\bg(\bx_0))$, we show that the partial derivatives are continuous in $(\bx_0,\bg(\bx_0))$.
		Therefore, we consider the case $\bx\notin C$ and differentiate $\bF$ from \eqref{eq:F(x).gBBKS1} with respect to $\bx$ and $\by$. We have
		
		\begin{equation}\label{eq:DxFtilde(x)}
			\bD_\bx\bF(\bx,\bg(\bx))=-\bI-\Delta t\left(\bA\widetilde H_i(\bx,\bg(\bx))+\bA\bx \nabla_\bx\widetilde H_i(\bx,\bg(\bx))\right),
		\end{equation}
		where the gradient denotes a row vector and
		\begin{equation}\label{eq:i1}
			i=\begin{cases}
				1, &x_1>\frac{b}{a}x_2, \\
				2, &x_1<\frac{b}{a}x_2.
			\end{cases}
		\end{equation}
		Now, since $\bg,\bm \sigma>\bzero$ we \suggest{can} write $\widetilde H_i(\bx,\bg(\bx))=e^{r(\bx)\ln\left(\frac{g_i(\bx)}{\sigma_i(\bx)}\right)}$, from which it follows that 
		\begin{equation}\label{eq:nabla(g/sigma)}
			\nabla_\bx\widetilde H_i(\bx,\bg(\bx))=\widetilde H_i(\bx,\bg(\bx))\left(\nabla_\bx r(\bx)\ln\left(\frac{g_i(\bx)}{\sigma_i(\bx)}\right)-r(\bx)\frac{\nabla_\bx \sigma_i(\bx)}{\sigma_i(\bx)}\right)
		\end{equation}
		since
		\begin{equation*}
			\nabla_\bx \ln\left(\frac{y_i}{\sigma_i(\bx)}\right)=
			\nabla_\bx \ln(y_i)-\nabla_\bx\ln(\sigma_i(\bx))=-\frac{\nabla_\bx \sigma_i(\bx)}{´\sigma_i(\bx)}.
		\end{equation*}
		Plugging \eqref{eq:nabla(g/sigma)} into \eqref{eq:DxFtilde(x)}\suggest{,} we find
		\begin{equation}\label{eq:DxFtildegbbks1}
			\bD_\bx\bF(\bx,\bg(\bx))=-\bI-\Delta t\widetilde H_i(\bx,\bg(\bx))\left(\bA+\bA\bx\left(\nabla_\bx r(\bx)\ln\left(\frac{g_i(\bx)}{\sigma_i(\bx)}\right)-r(\bx)\frac{\nabla_\bx \sigma_i(\bx)}{\sigma_i(\bx)}\right) \right).
		\end{equation}
		Furthermore, $\bA\bx_0=\bzero$ for $\bx_0\in C$ together with $\sigma_1,\sigma_2,r\in \mathcal C^2$ as well as equation \eqref{eq:DxFtildegbbks1} yield
		\begin{equation}\label{eq:limDxF=-I-dtA}
			\lim_{\bx\to\bx_0}\bD_\bx\bF(\bx,\bg(\bx))=-\bI-\Delta t\lim_{\bx\to\bx_0}\widetilde H_i(\bx,\bg(\bx))\bA=-\bI-\Delta t\bA.
		\end{equation}
		Moreover, due to \eqref{eq:DyFtilde=I-ve_i} and since $\bv^{(i)}$ is continuous with $\bv^{(i)}(\bx_0)=\bzero$ for $\bx_0\in C$, we find
		\begin{equation}\label{eq:limDyF=I}
			\lim_{\bx\to \bx_0}	\bD_\by\bF(\bx,\bg(\bx))=\bI-\bv^{(i)}(\bx_0)\be_i^\mathsf{T}=\bI.
		\end{equation}
		As a result of \eqref{eq:DxF=-I-dtA}, \eqref{eq:limDxF=-I-dtA} and \eqref{eq:DyF=I}, \eqref{eq:limDyF=I}\suggest{,} we thus know that all partial first derivatives of $\bF$ are continuous in $(\bx_0,\bg(\bx_0))$ for all $\bx_0\in C$, which implies that $\bF$ is differentiable in $(\bx_0,\bg(\bx_0))$ for all $\bx_0\in C$.
		
		Finally, due to \eqref{eq:DyF=I} we know that $\bD_\by\bF(\bx_0,\bg(\bx_0))$ is invertible for all $\bx_0\in C$.
		
		Altogether\suggest{,} all requirements of Theorem \ref{Thm:gDiff} are fulfilled, which implies that $\bg$ is differentiable on $C$ and that 
		\[\bD\bg(\bx_0)=-(\bD_\by\bF(\bx_0,\bg(\bx_0)))^{-1}\bD_\bx\bF(\bx_0,\bg(\bx_0))\]
		holds for all $\bx_0\in C$. Moreover, all entries of the inverse of $\bD_\by\bF(\bx,\bg(\bx))$ are continuous functions of $\bx$, which proves that $\bg\in \mathcal C^1(\mathcal D)$. Finally, \eqref{eq:DxF=-I-dtA} and \eqref{eq:DyF=I} yield \[\bD\bg(\by^*)=\bI+\Delta t\bA.\]

		\item
		In this part, we use the equations \eqref{eq:DxFtildegbbks1} and 	\begin{equation}\label{eq:DyFtildeInversegBBKS1}
			(\bD_\by\bF(\bx,\bg(\bx)))^{-1}=\begin{cases}
				\frac{1}{1-v_1^{(1)}(\bx)}\begin{pmatrix*}[r]1 & 0\\ v_2^{(1)}(\bx) & 1-v_1^{(1)}(\bx)\end{pmatrix*}, & x_1>\tfrac{b}{a}x_2,\\
				\frac{1}{1-v_2^{(2)}(\bx)}\begin{pmatrix*}[r]1- v_2^{(2)}(\bx) &v_1^{(2)}(\bx)\\  0& 1\end{pmatrix*}, & x_1<\tfrac{b}{a}x_2\end{cases}
		\end{equation} to show that the first derivatives of $\bg$ are Lipschitz continuous on a sufficiently small neighborhood $\mathcal D$ of $\by^*$. For this, we make use of the fact that the set of bounded Lipschitz continuous functions is closed under summation, multiplication and composition. Hence, all we need to prove is that each entry in the matrices \eqref{eq:DxFtildegbbks1} and \eqref{eq:DyFtildeInversegBBKS1} is bounded and Lipschitz continuous on $\mathcal D$\suggest{,} and to use the fact that the natural logarithm and each exponential function are locally Lipschitz continuous. 
		
		To bound the corresponding functions, we choose $\mathcal D$ in such a way that $g_i,\sigma_i$ and $1-v_i^{(i)}$ have an upper bound $C_i>0$ and lower bound $c_i>0$. This is possible by choosing $\bar{\mathcal D}\tm D$ since these functions are continuous at $\by^*$ and satisfy $\bg(\by^*)=\bm \sigma(\by^*)=\by^*>\bzero$ as well as $1-v_i^{(i)}(\by^*)=1$. As a result\suggest{,} even the first two derivatives of $\bm \sigma$ and $r$ are bounded on $\mathcal{D}$. This way\suggest{,} we can compute the Lipschitz constants of $\bm \sigma$, its first derivatives and its reciprocal by using the mean value theorem.
		Analogously, $\bg$ as well as $\frac{1}{g_i}$ are bounded Lipschitz continuous functions \suggest{for $i=1,2$} as their first derivatives are bounded on $\mathcal{D}$. By this reasoning, it is straightforward to verify that each matrix entry in \eqref{eq:DxFtildegbbks1} and \eqref{eq:DyFtildeInversegBBKS1} is a bounded Lipschitz continuous function.
		
	\end{enumerate}
\end{proof}
As a result of Theorem \ref{Thm_MPRK_stabil} and Theorem \ref{Thm:_Asym_und_Instabil} we obtain the following statements due to $\bD\bg(\by^*)=\bI+\Delta t\bA$.
\begin{cor}\label{cor:gBBKS1}
	Let $\by^*>\bzero$ be an arbitrary steady state of \eqref{PDS_test}. Under the assumptions of Theorem \ref{Thm:gBBKS1}, the gBBKS1 schemes have the same stability function as the underlying Runge--Kutta method, i.\,e.\ $R(z)=1+z$ and the following holds.
	\begin{enumerate}
		\item If $\abs{R(-(ac+b)\Delta t)}<1$, then $\by^*$ is a stable fixed point of each gBBKS1 scheme and there exists a $\delta>0$, such that $\vec{1\\ c}^\mathsf{T} \by^{0}=\vec{1\\ c}^\mathsf{T}\by^*$ and $\norm{\by^0-\by^*}<\delta$ imply $\by^n\to \by^*$ as $n\to \infty$.
		\item If $\abs{R(-(ac+b)\Delta t)}>1$, then $\by^*$ is an unstable fixed point of each gBBKS1 scheme.
	\end{enumerate}
\end{cor}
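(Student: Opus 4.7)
The corollary follows by combining Theorem \ref{Thm:gBBKS1} (which provides the regularity of $\bg$ and the explicit form of $\bD\bg(\by^*)$) with the spectral characterizations in Theorem \ref{Thm_MPRK_stabil} and Theorem \ref{Thm:_Asym_und_Instabil}. My plan is first to analyze the spectrum of $\bD\bg(\by^*) = \bI + \Delta t \bA$, then dispatch each case.

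For the spectral analysis, I would note that since $\bA$ has eigenvalues $0$ and $\lambda = -(ac+b)$ (as stated after \eqref{PDS_test}), the eigenvalues of $\bD\bg(\by^*) = \bI + \Delta t \bA$ are precisely $1$ and $1 + \Delta t\lambda = R(-(ac+b)\Delta t)$ with $R(z) = 1+z$. The eigenvalue $1$ corresponds to the eigenvector $\by^*$ spanning $\ker(\bA)$, which is automatic from the $\bD\bg(\by^*)\bv = \bv$ assertion in Theorem \ref{Thm_MPRK_stabil} (and also directly verified from $\bA\by^* = \bzero$). This identifies the stability function of the scheme as $R(z) = 1+z$, matching the underlying explicit Euler method.

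For part (a), the hypotheses of Theorem \ref{Thm:gBBKS1} supply $\bg \in \mathcal{C}^1(\mathcal{D})$ with Lipschitz continuous first derivatives, and by construction the gBBKS1 scheme preserves $y_1 + cy_2$, so $\bg$ conserves the linear invariant. Every point of $\ker(\bA)\cap \mathcal{D}$ is a fixed point by the first statement of Theorem \ref{Thm:gBBKS1}. Hence both parts of Theorem \ref{Thm_MPRK_stabil} apply: since the single non-unit eigenvalue $R(-(ac+b)\Delta t)$ has modulus less than $1$, $\by^*$ is Lyapunov stable, and moreover there exists $\delta > 0$ such that initial data $\by^0$ on the affine hyperplane $H = \{\by : (1,c)\by = (1,c)\by^*\}$ with $\norm{\by^0 - \by^*} < \delta$ produces iterates converging to $\by^*$.

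For part (b), by Theorem \ref{Thm:gBBKS1} we have $\bg \in \mathcal{C}^1$, so Theorem \ref{Thm:_Asym_und_Instabil}(b) applies: since $\bD\bg(\by^*)$ possesses the eigenvalue $R(-(ac+b)\Delta t)$ whose modulus exceeds $1$, $\by^*$ is unstable. I do not expect any serious obstacle here, since the heavy lifting, namely establishing the regularity of $\bg$ despite the non-differentiability of the defining equation $\bF$ on $\ker(\bA)$ and computing $\bD\bg(\by^*)$, was already accomplished in Theorem \ref{Thm:gBBKS1}; the corollary is essentially a bookkeeping step assembling that information with the abstract stability theorems.
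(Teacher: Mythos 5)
Your proposal is correct and follows exactly the route the paper takes: the paper derives the corollary in one line from $\bD\bg(\by^*)=\bI+\Delta t\bA$ (Theorem \ref{Thm:gBBKS1}) combined with Theorem \ref{Thm_MPRK_stabil} for stability and convergence on the invariant set and Theorem \ref{Thm:_Asym_und_Instabil} for instability. Your more explicit spelling out of the spectrum $\{1,\,1-\Delta t(ac+b)\}$ and the verification of the hypotheses is just the bookkeeping the paper leaves implicit.
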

\subsection{Stability of second order gBBKS schemes}
In this subsection we investigate  the gBBKS2($\alpha$) schemes \eqref{eq:gBBKS2Intro} applied to \eqref{PDS_test}, which can be written in the form
\begin{subequations}\label{eq:gBBKS22b}
	\begin{align}\label{eq:stage1_gBBKS22b}
		&\begin{aligned}
			\mathllap{y_i^{(2)}} &= y_i^n+\alpha \Delta t (\bA\by^n)_i \Bigg(\prod_{j\in J^{n}} \frac{y^{(2)}_j}{\pi^{n}_j}\Bigg)^{\ns q^{n}},
		\end{aligned}\\ 
		&\begin{multlined}[b][.7\columnwidth]
			\mathllap{y_i^{n+1}} = y_i^n+\Delta t \left(\Big(1-\frac{1}{2\alpha}\Big) (\bA\by^n)_i+ \frac{1}{2\alpha}(\bA\by^{(2)})_i\right)\Bigg(\prod_{m\in M^{n}} \frac{y^{n+1}_m}{\sigma^{n}_m}\Bigg)^{\ns r^{n}},\label{eq:finalstage_gBBKS22c}
		\end{multlined}  
	\end{align}
\end{subequations}
for $i=1,2$, $\alpha\geq \tfrac12$ and
\begin{subequations}\label{eq:sets22}
	\begin{align}\label{eq:Jset22}
		J^{n}&=\left\{ j\in \{1,2\}\mid (\bA\by^n)_j<0\right\},\\\label{eq:Mset22}
		M^{n}&=\left\{ m\in \{1,2\}\;\Big |\;\Big(1-\frac{1}{2\alpha}\Big) (\bA\by^n)_m+ \frac{1}{2\alpha}(\bA\by^{(2)})_m<0\right\}.
	\end{align}
\end{subequations}
Similarly to the gBBKS1 case\suggest{,} we introduce functions $r,q,\bm \pi$ and $\bm \sigma$ to describe the dependence of the parameters on $\by^n$. Note that $\bm \sigma$ can depend on $\by^n$ as well as $\by^{(2)}$, see \cite{MR4109346,BBKS2007,BRBM2008}, and thus will be described by a map $\bm\sigma\colon\R^2_{>0}\times \R^2_{>0}\to \R^2_{>0}$. 
\begin{thm}\label{Thm:gBBKS2}
	Let $\pi_1,\pi_2,r,q\in \mathcal{C}^2(\R^2_{>0},\R_{>0})$, $\bm \sigma\in \mathcal{C}^2(\R^2_{>0}\times \R^2_{>0},\R^2_{>0})$ and $\by^*>\bzero$ be a steady state solution of \eqref{PDS_test}. Also, let $\mathcal D$ be a sufficiently small neighborhood of $\by^*$ and suppose that $\bm \sigma(\bv,\bv)=\bm \pi(\bv)=\bv$ for all $\bv\in C= \ker(\bA)\cap \mathcal D$. Then \suggest{the} map $\bg$ generating the iterates of the gBBKS2($\alpha$) family satisfies $\bg(\bv)=\bv$ for all steady states $\bv\in  C$ and the following statements are true.
	\begin{enumerate}
		\item\label{it:aThm} The map $\bg$ satisfies $\bg\in \mathcal{C}^1(\mathcal D)$ and $\bD\bg(\by^*)=\bI+\Delta t\bA+\frac{(\Delta t)^2}{2}\bA^2$.
		\item  The first derivatives of $\bg$ are bounded and Lipschitz continuous on $\mathcal D$.
	\end{enumerate}
\end{thm}
\begin{proof}
	Our main strategy is to follow the ideas used in the proof of  Theorem \ref{Thm:gBBKS1}. For this, we first compute the sets $J^n$ and $M^n$ in the case of the linear test problem \eqref{PDS_test}. Using \eqref{eq:stage1_gBBKS22b}, we obtain 
	\[ \Big(1-\frac{1}{2\alpha}\Big) (\bA\by^n)_m+ \frac{1}{2\alpha}(\bA\by^{(2)})_m=(\bA\by^n)_m\left(1+\alpha \Delta t\Bigg(\prod_{j\in J^{n}} \frac{y^{(2)}_j}{\pi^{n}_j}\Bigg)^{\ns q^{n}}\right),\]
	so that\[M^n=J^n=\begin{cases}
		\{1\}, & y_1^n>\frac{b}{a}y^n_2,\\
		\emptyset, & y_1^n=\frac{b}{a}y^n_2,\\
		\{2\}, & y_1^n<\frac{b}{a}y^n_2
		
	\end{cases}\] follows as in the case of gBBKS1.  Next, we define 
	\begin{equation}
		\begin{aligned}
			\by^{(2)}(\bx)&=\bx-\Delta t\alpha\bA\bx\begin{cases}
				\left(\frac{y_1^{(2)}(\bx)}{\pi_1(\bx)}\right)^{q(\bx)},& x_1>\frac{b}{a}x_2,\\
				1, & x_1=\frac{b}{a}x_2,\\
				\left(\frac{y_2^{(2)}(\bx)}{\pi_2(\bx)}\right)^{q(\bx)},& x_1<\frac{b}{a}x_2
			\end{cases} \label{eq:1.gBBKS2}
		\end{aligned}
	\end{equation}
	and
	\begin{equation}\label{eq:defFgbbks2}
		\bF(\bx,\by)=\by-\bx-\Delta t\left(\Big(1-\frac{1}{2\alpha}\Big)\bA\bx+\frac{1}{2\alpha}\bA\by^{(2)}(\bx)\right)H(\bx,\by),
	\end{equation}
	where
	\begin{equation*}
		H(\bx,\by)=\begin{cases}
			\widetilde H_1(\bx,\by),& x_1>\frac{b}{a}x_2,\\
			1, & x_1=\frac{b}{a}x_2,\\
			\widetilde H_2(\bx,\by),& x_1<\frac{b}{a}x_2
		\end{cases}
	\end{equation*}
	as well as
	\begin{equation}\label{eq:HtildegBBKS2}
		\widetilde H_i(\bx,\by)=\left(\frac{y_i}{\sigma_i(\bx,\by^{(2)}(\bx))}\right)^{r(\bx)},\quad i=1,2,
	\end{equation}
	and point out that the function $\bg$ generating the gBBKS2($\alpha$) iterates is the unique solution to 
	\begin{equation}
		\begin{aligned}\label{eq:0=FtildegBBKS2}
			\bzero&=\bF(\bx,\bg(\bx)).
		\end{aligned}
	\end{equation}
	Note that equation \eqref{eq:1.gBBKS2} represents the gBBKS1 schemes applied to \eqref{PDS_test} with a time step size of $\Delta t\alpha$. Hence, Theorem \ref{Thm:gBBKS1} implies that the function $\by^{(2)}$ is a $\mathcal{C}^1$-map on $\mathcal D$ with Lipschitz continuous first derivatives and \[\bD\by^{(2)}(\by^*)=\bI+\Delta t\alpha \bA.\]
	Furthermore, $\bv\in\ker(\bA)$ implies $\by^{(2)}(\bv)=\bv$, and thus, inserting $\bx=\bv$ into equation \eqref{eq:0=FtildegBBKS2} yields $\bg(\bv)=\bv$.
	\begin{enumerate}
		\item Along the same lines as in the proof of Theorem \ref{Thm:gBBKS1} we see that the map $\bF$ is not differentiable on $\mathcal D\times \mathcal D$ since $\bm \sigma(\bx_0,\by^{(2)}(\bx_0))=\bx_0$ holds for all $\bx_0\in  C$. However, $\bF$ is differentiable in $(\bx,\by)\in E=\mathcal D\setminus\ker(\bA)\times \mathcal D$ since 
		\begin{equation}\label{eq:FgBBKS2}
			\bF(\bx,\by)=\by-\bx-\Delta t\left(\Big(1-\frac{1}{2\alpha}\Big)\bA\bx+\frac{1}{2\alpha}\bA\by^{(2)}(\bx)\right)\left(\frac{y_i}{\sigma_i(\bx,\by^{(2)}(\bx))}\right)^{r(\bx)},\quad i=\begin{cases}
				1&, x_1>\frac{b}{a}x_2,\\
				2&, x_1<\frac{b}{a}x_2
			\end{cases}
		\end{equation}
		and $r\in \mathcal{C}^2(\R^2_{>0},\R_{>0})$, $\bm \sigma\in \mathcal{C}^2(\R^2_{>0}\times \R^2_{>0},\R^2_{>0})$ as well as $\by^{(2)}\in\mathcal C^1(\mathcal D)$. Following the proof of Theorem \ref{Thm:gBBKS1}, we show that $\bD_\by\bF(\bx,\bg(\bx))$ is nonsingular in order to show that $\bg\in \mathcal C^1$ on $\mathcal D\setminus\ker(\bA)$.
		First note that for $\bx\notin C$ we have
		\begin{equation}\label{eq:DyFgbbks2}
			\begin{aligned}
				\bD_\by\bF(\bx,\bg(\bx))&=\bI-\Delta t\left(\Big(1-\frac{1}{2\alpha}\Big)\bA\bx+\frac{1}{2\alpha}\bA\by^{(2)}(\bx)\right)\nabla_\by \widetilde H_i(\bx,\bg(\bx)), \quad i=\begin{cases}
					1 &, x_1>\frac{b}{a}x_2,\\
					2&, x_1<\frac{b}{a}x_2,
				\end{cases}
			\end{aligned}
		\end{equation}
		where \eqref{eq:HtildegBBKS2} yields
		\begin{equation*}
			\nabla_\by \widetilde H_i(\bx,\bg(\bx))=\frac{r(\bx)}{g_i(\bx)}\widetilde H_i(\bx,\bg(\bx))\be_i^\mathsf{T}
		\end{equation*}
		with the $i$-th unit vector $\be_i\in \R^2$  as in the proof of Theorem \ref{Thm:gBBKS1}.
		In order to see that $\bD_\by\bF(\bx,\bg(\bx))$ is invertible, we introduce
		\begin{equation}\label{eq:vtilde}
			\begin{aligned}
				\bv^{(i)}(\bx)&=\Delta t\left(\left(1-\frac{1}{2\alpha}\right)\bA\bx+\frac{1}{2\alpha}\bA\by^{(2)}(\bx)\right)\frac{r(\bx)}{g_i(\bx)}\widetilde H_i(\bx,\bg(\bx))\\
			\end{aligned}
		\end{equation}
		and rewrite \eqref{eq:DyFgbbks2} as
		\begin{equation*}
			\bD_\by\bF(\bx,\bg(\bx))=\bI-\bv^{(i)}(\bx)\be_i^\mathsf{T}.
		\end{equation*}
		Hence, we obtain \[ \det(\bD_\by\bF(\bx,\bg(\bx)))=1-v_i^{(i)}.\] 
		Using \eqref{eq:1.gBBKS2}, we see that 
		\[\bv^{(i)}=\Delta t\bA\bx\left(1+\alpha \Delta t\Bigg( \frac{y_i^{(2)}(\bx)}{\pi_i(\bx)}\Bigg)^{\ns q(\bx)}\right)\frac{r(\bx)}{g_i(\bx)}\widetilde H_i(\bx,\bg(\bx)), \]
		where $(\bA\bx)_i<0$ by definition of the gBBKS2($\alpha$) schemes. As a result we know $v_i^{(i)}<0$, and hence $\det(\bD_\by\bF(\bx,\bg(\bx)))\neq 0$ proving that $\bD_\by\bF(\bx,\bg(\bx))$ is invertible. This together with the corresponding arguments of Theorem \ref{Thm:gBBKS1} implies that $\bg\in \mathcal C^1$ on $\mathcal D\setminus \ker(\bA)$ and 	
		\begin{equation}\label{eq:Dg(x)proofgbbks2}
			\bD\bg(\bx)=-(\bD_\by\bF(\bx,\bg(\bx)))^{-1}\bD_\bx\bF(\bx,\bg(\bx))
		\end{equation}
		for $\bx\in\mathcal D\setminus \ker(\bA)$.
		To apply Theorem \ref{Thm:gDiff}, we proceed as in the proof of  Theorem \ref{Thm:gBBKS1}, i.\,e.\ we have to show that
		\begin{enumerate}[label=\arabic*.]
			\item\label{it:1} the map $\bg$ is continuous in any $\bx\in C$.
			\item\label{it:2} the map $\bF$ is differentiable in $(\bx,\bg(\bx))$ for all $\bx\in C$.
			\item\label{it:3} the Jacobian $\bD_\by\bF(\bx,\bg(\bx))$ with respect to $\by$ is invertible for all $\bx\in C$.
		\end{enumerate}
		
		The continuity of $\bg$ follows along the same lines as in the case of gBBKS1, where we additionally use $\by^{(2)}\in \mathcal C^1(\mathcal D)$ for bounding $H(\cdot,\bg(\cdot))$. 
		
		For proving the differentiability of $\bF$ in $(\bx,\bg(\bx))$ for all $\bx\in C$ we consider an arbitrary \suggest{element} $\bx_0\in C$. Note that $\Psi(\bx)=H(\bx,\bg(\bx_0))$ is continuous in $\bx_0$ with $\Psi(\bx_0)=1$. Furthermore, \[\mathbf \Phi(\bx)=\Big(1-\frac{1}{2\alpha}\Big)\bA\bx+\frac{1}{2\alpha}\bA\by^{(2)}(\bx)\] satisfies $\mathbf \Phi(\bx_0)=\bzero$, which means that part \ref{it:alemdiff} of Lemma \ref{Lem:diff} \suggest{from the appendix} together with $\bD\by^{(2)}(\bx_0)=\bI+\Delta t\alpha\bA$ yield\suggest{s}
		\begin{equation}\label{eq:DxFtildegBBKS2}
			\begin{aligned}
				\bD_\bx\bF(\bx_0,\bg(\bx_0))&=-\bI-\Delta t\left(\Big(1-\frac{1}{2\alpha}\Big)\bA+\frac{1}{2\alpha}\bA\bD\by^{(2)}(\bx_0)\right)\\
				&=-\bI-\Delta t\left(\bA+\frac{\Delta t}{2}\bA^2\right).
			\end{aligned}
		\end{equation}
		Also, since $\mathbf\Phi(\bx_0)H(\bx_0,\by)=\bzero$ for all $\by\in \R^2_{>0}$, we find
		\begin{equation}\label{eq:DyF=IgBBKS2}
			\begin{aligned}
				\bD_\by\bF(\bx_0,\bg(\bx_0))&=\bI,
			\end{aligned}
		\end{equation}
		which shows that $\bF$ is partially differentiable in $(\bx_0,\bg(\bx_0))$. We now prove that the partial derivatives of $\bF$ are also continuous in $(\bx_0,\bg(\bx_0))$, which shows the differentiability of $\bF$ in $(\bx,\bg(\bx))$ for all $\bx\in C$. To that end, we consider $\bx\notin C$ and differentiate $\bF$ from \eqref{eq:FgBBKS2} with respect to $\bx$ and $\by$. First, due to \eqref{eq:DyFgbbks2} and since $\bv^{(i)}$ is continuous with $\bv^{(i)}(\bx_0)=\bzero$ for $\bx_0\in C$, we find
		\begin{equation}\label{eq:limDyF=Igbbks2}
			\lim_{\bx\to \bx_0}	\bD_\by\bF(\bx,\bg(\bx))=\bI-\bv^{(i)}(\bx_0)\be_i^\mathsf{T}=\bI
		\end{equation}
		proving the continuity of the partial derivatives in $(\bx_0,\bg(\bx_0)$ with respect to $\by$. Furthermore, we have 
		\begin{equation*}
			\begin{aligned}
				\bD_\bx\bF(\bx,\bg(\bx))=&-\bI-\Delta t\left(\Big(1-\frac{1}{2\alpha}\Big)\bA+\frac{1}{2\alpha}\bA\bD\by^{(2)}(\bx)\right)\widetilde H_i(\bx,\bg(\bx))\\
				&-
				\Delta t\left(\Big(1-\frac{1}{2\alpha}\Big)\bA\bx+\frac{1}{2\alpha}\bA\by^{(2)}(\bx)\right)\nabla_\bx \widetilde H_i(\bx,\bg(\bx)),
			\end{aligned}
		\end{equation*}
		whose entries converge to those of $\bD_\bx\bF(\bx_0,\bg(\bx_0))$ from \eqref{eq:DxFtildegBBKS2} because of the following. First, we have $\by^{(2)}(\bx_0)=\bx_0$ and $\widetilde{H}_i\in \mathcal C^1(\mathcal D\times \mathcal D)$, which means that the last addend disappears as $\bx\to\bx_0\in C$. Additionally, inserting $\bD\by^{(2)}(\bx_0)=\bI+\alpha\Delta t\bA$ and $\lim_{\bx\to \bx_0}\widetilde{H}_i(\bx,\bg(\bx))=1$ yield \eqref{eq:DxFtildegBBKS2}.

		Finally, it follows from \eqref{eq:DyF=IgBBKS2} that the Jacobian $\bD_\by\bF(\bx,\bg(\bx))$ with respect to $\by$ is invertible for all $\bx\in C$. Hence, Theorem \ref{Thm:gDiff} together with the considerations above proves that $\bg$ is differentiable in all $\bx\in \mathcal D$. 
		
		Moreover, since $\bF$ is continuously differentiable in $(\bx,\bg(\bx))$\suggest{,} we find due to \eqref{eq:Dg(x)proofgbbks2} that even $\bg\in \mathcal C^1(\mathcal D)$ holds \suggest{true}. Furthermore, inserting \eqref{eq:DxFtildegBBKS2} and \eqref{eq:DyF=IgBBKS2} into formula \eqref{eq:Dg(x)proofgbbks2} yields
		\[\bD\bg(\bx)=-(\bD_\by\bF(\bx,\bg(\bx)))^{-1}\bD_\bx\bF(\bx,\bg(\bx))=\bI+\Delta t\bA+\frac{(\Delta t)^2}{2}\bA^2.\]
		\item We know that $\by^{(2)}\in\mathcal C^1$ has Lipschitz continuous first derivatives on $\mathcal D$ and that $\bm \sigma\in \mathcal C^2$. Hence, with
		\begin{equation*}
			\begin{aligned}
				\nabla_\bx\widetilde H_i(\bx,\bg(\bx))=&\widetilde H_i(\bx,\bg(\bx))\Biggl(\nabla_\bx r(\bx)\ln\left(\frac{g_i(\bx)}{\sigma_i(\bx,\by^{(2)}(\bx))}\right)\\
				&-r(\bx)\frac{\nabla_\bx \sigma_i(\bx,\by^{(2)}(\bx))+\nabla_\by \sigma_i(\bx,\by^{(2)}(\bx))\bD\by^{(2)}(\bx)}{\sigma_i(\bx)}\Biggr),
			\end{aligned}
		\end{equation*}
		which is analogous to \eqref{eq:nabla(g/sigma)}, this part can be proven along the same lines as in the proof of part \ref{it:cThmgbbks1} of Theorem \ref{Thm:gBBKS1}.
	\end{enumerate}
\end{proof}
This theorem together with Theorem \ref{Thm_MPRK_stabil} and Theorem \ref{Thm:_Asym_und_Instabil}  allows us to conclude the following statements from $\bD\bg(\by^*)=\bI+\Delta t\bA+\tfrac12(\Delta t\bA)^2$.
\begin{cor}\label{cor:gBBKS2}
	Let $\by^*>\bzero$ be an arbitrary steady state of \eqref{PDS_test}. Under the assumptions of Theorem \ref{Thm:gBBKS2}, the gBBKS2($\alpha$) schemes have the same stability function as the underlying Runge--Kutta method, i.\,e.\ $R(z)=1+z+\frac{z^2}{2}$ and the following holds.
	\begin{enumerate}
		\item If $\abs{R(-(ac+b)\Delta t)}<1$, then $\by^*$ is a stable fixed point of each gBBKS2($\alpha$) scheme and there exists a $\delta>0$, such that $\vec{1\\ c}^\mathsf{T} \by^{0}=\vec{1\\ c}^\mathsf{T}\by^*$ and $\norm{\by^0-\by^*}<\delta$ imply $\by^n\to \by^*$ as $n\to \infty$.
		\item If $\abs{R(-(ac+b)\Delta t)}>1$, then $\by^*$ is an unstable fixed point of each gBBKS2($\alpha$) scheme.
	\end{enumerate}
\end{cor}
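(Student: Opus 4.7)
The plan is to reduce the corollary to a spectral computation for the matrix $\bD\bg(\by^*) = \bI + \Delta t\bA + \tfrac{(\Delta t)^2}{2}\bA^2$ supplied by Theorem \ref{Thm:gBBKS2}, followed by a direct application of Theorems \ref{Thm_MPRK_stabil} and \ref{Thm:_Asym_und_Instabil}. Since all the analytic regularity has already been established (continuous differentiability on a neighborhood $\mathcal D$ of $\by^*$ and local Lipschitz continuity of the first derivatives), essentially nothing remains beyond identifying eigenvalues and invoking the hypotheses of those theorems.

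First I would observe that $\bD\bg(\by^*) = p(\Delta t\bA)$ with $p(z) = 1 + z + z^2/2$, so the eigenvalues of $\bD\bg(\by^*)$ are $p$ evaluated at the eigenvalues of $\Delta t\bA$. Since $\bA$ has eigenvalues $0$ and $\lambda = -(ac+b)$, the spectrum of $\bD\bg(\by^*)$ is $\{1,\,R(-(ac+b)\Delta t)\}$ with $R(z)=1+z+z^2/2$. The eigenvalue $1$ is associated with the eigenvector $\by^* \in \ker(\bA)$, consistent with the general statement $\bD\bg(\by^*)\bv = \bv$ for $\bv \in \ker(\bA)$ in Theorem \ref{Thm_MPRK_stabil}.

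For part (a), under the assumption $|R(-(ac+b)\Delta t)|<1$, the non-unit eigenvalue has modulus strictly less than $1$. By Theorem \ref{Thm:gBBKS2}, all hypotheses of Theorem \ref{Thm_MPRK_stabil}\ref{it:Thma} are met (with $k=1$ and $\ker(\bA)=\Span(\by^*)$), giving Lyapunov stability of $\by^*$. For the local convergence statement, I would verify that gBBKS2($\alpha$) preserves the linear invariant $y_1 + cy_2$, so $\bg(H\cap D)\subseteq H\cap D$ for $H = \{\by\in\R^2 : \vec{1\\c}^\mathsf{T}\by = \vec{1\\c}^\mathsf{T}\by^*\}$; then Theorem \ref{Thm_MPRK_stabil}\ref{it:Thmb} yields the desired $\delta>0$ such that initial data in $H$ with $\norm{\by^0-\by^*}<\delta$ produce iterates converging to $\by^*$.

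For part (b), the hypothesis $|R(-(ac+b)\Delta t)|>1$ says that $\bD\bg(\by^*)$ has an eigenvalue of modulus greater than $1$, and since $\bg\in\mathcal C^1$ by Theorem \ref{Thm:gBBKS2}, Theorem \ref{Thm:_Asym_und_Instabil} directly gives instability of $\by^*$. I do not anticipate any genuine obstacle: the entire content of the corollary is packaged in Theorem \ref{Thm:gBBKS2}, and only the spectral identification $p(\Delta t\bA)\rightsquigarrow\{1,R(-(ac+b)\Delta t)\}$ and invariant preservation need to be spelled out.
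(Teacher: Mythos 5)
Your proposal is correct and follows exactly the paper's route: the paper also deduces the corollary directly from $\bD\bg(\by^*)=\bI+\Delta t\bA+\tfrac{(\Delta t)^2}{2}\bA^2$ established in Theorem \ref{Thm:gBBKS2}, reading off the eigenvalues $1$ and $R(-(ac+b)\Delta t)$ and invoking Theorem \ref{Thm_MPRK_stabil} (with the invariant-preservation of gBBKS for the convergence statement) and Theorem \ref{Thm:_Asym_und_Instabil} for instability. Your write-up merely spells out the spectral identification and the role of $H$ slightly more explicitly than the paper does.
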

To summarize the presented analysis of gBBKS schemes, we conclude that the first and second order gBBKS schemes preserve the stability domain of the underlying Runge--Kutta method while preserving positivity. 

\section{Numerical Experiments}\label{sec:NumTests}
In this section \suggest{we focus on the numerical investigation of GeCo and BBKS schemes.} The latters can be written as gBBKS methods using
\[\sigma_m^n=y_m^n \qta r^n=1\]
in \eqref{eq:gBBKS1Intro}, and
\[\pi_m^n=y_m^n,\quad \sigma_m^n=(y_m^n)^{1-\tfrac1\alpha}(y_m^{(2)})^{\tfrac1\alpha} \qta  q^n=r^n=1\]
in \eqref{eq:gBBKS2Intro}, respectively.
\suggest{In particular, we use $\alpha=1$, so that in the following BBKS1, BBKS2(1) as well as GeCo1 and GeCo2 will be analyzed. }
\subsection{Numerical stability of GeCo \suggest{and BBKS schemes}}
To confirm the stability \suggest{results of Section \ref{sec:GeCo} and Section \ref{sec:BBKS}} numerically, we consider the initial value problem 
\begin{equation}\label{eq:testgeco1}
	\by'=\bA\by, \quad \by^\suggest{0}=(0,3,3,3,4)^\mathsf{T},
\end{equation}
where $\bA$ is the $5\times 5$ Metzler matrix
\begin{equation}\label{eq:Atest}
	\bA=\Vec{
		-4 & 2 & 1 & 2 & 2  \\
		1 & -4 & 1 & 0 & 2 \\
		0 & 0 & -4 & 2 & 0 \\
		2 & 2 & 2 & -4 & 0 \\
		1 & 0 & 0 & 0 & -4
}.
\end{equation}
\suggest{The} spectrum of $\bA$ is given by $\sigma(\bA)=\{0,-5-\sqrt3,-5+\sqrt3,-5-\suggest{\ii},-5+\suggest{\ii}\}\tm \C^-$ including real as well as complex eigenvalues. Furthermore, the kernel of $\bA^\mathsf{T}$ is given by $\ker(\bA^\mathsf{T})=\Span(\bn)$ with
$\bn=(1,1,1,1,1)^\mathsf{T}$. Hence, the total mass $\bn^\mathsf{T}\by(t)=\bn^\mathsf{T} \by^0=13$ is a linear invariant for the system, in correspondence of the initial value $\by(0)=\by^\suggest{0}$. The reference solution of the problem is depicted by the dashed lines in Figures~\ref{Fig:geco1} to \ref{Fig:mBBKS2}.

\suggest{We want to note that even though the stability functions of gBBKS and GeCo2 were obtained by analyzing a $2\times2$ system, we will see that the corresponding stability results are well reflected also for a larger system.}

\subsubsection{Investigation of GeCo schemes}
\suggest{Numerical} solutions obtained by GeCo1 and the \suggest{corresponding} error plots are shown in Figure \ref{Fig:geco1}. In error plot \ref{errgeco11}\suggest{,} the convergence of the numerical solution to the steady state in the long run  can be \suggest{seen}, despite \suggest{the} low accuracy in the short run with \suggest{the comparatively large time step of $\Delta t=1$}. 



\begin{figure}
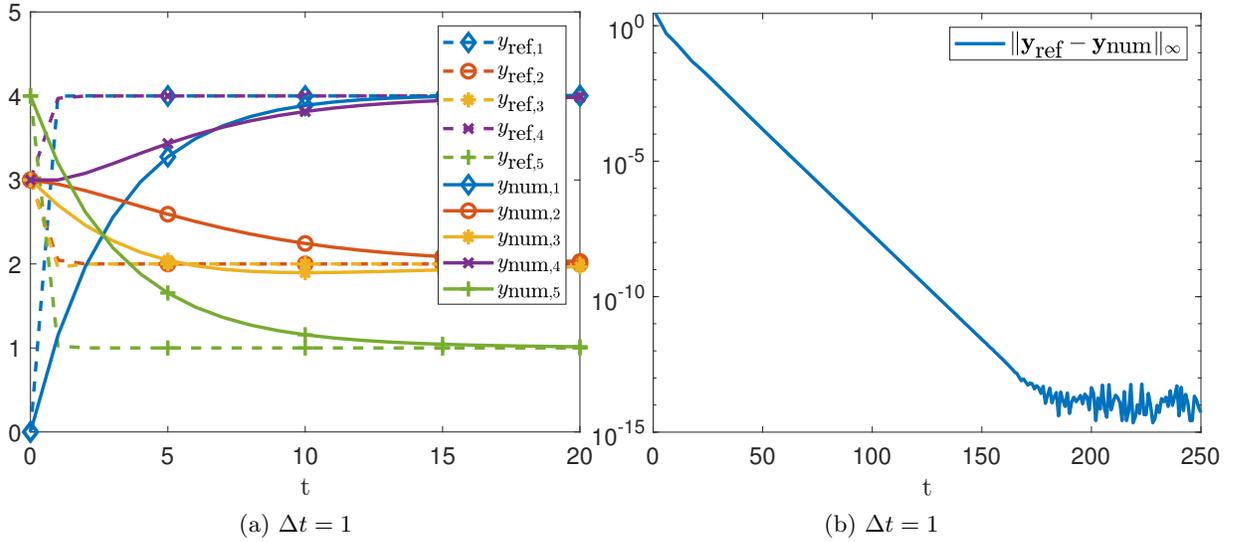

	\begin{subfigure}[t]{0.495\textwidth}
		\includegraphics[height=0.3\textheight]{linmod5x5_GeCo1-eps-converted-to.pdf}
		\subcaption{$\Delta t=1$ }\label{geco11}
	\end{subfigure}
	\begin{subfigure}[t]{0.495\textwidth}%
		\includegraphics[height=0.295\textheight]{linmod5x5_err_GeCo1-eps-converted-to.pdf}
		\subcaption{$\Delta t=1$}\label{errgeco11}
	\end{subfigure}
	\caption{Numerical approximations of \eqref{eq:testgeco1} and error plot using GeCo1.}
	\label{Fig:geco1}
\end{figure}

\suggest{Based on the analysis for the system \eqref{PDS_test}, we use the function  \[R(z)=1+z+\frac12 z^2\varphi(\Delta t\tr(\bS^-))\]
even in the context of \eqref{eq:testgeco1} to determine the critical time step size $\Delta t_\text{GeCo2}$ of GeCo2.
For the system matrix \eqref{eq:Atest}, we find $\tr(\bS^-)=-\tr(\bA)=20$. A numerical calculation shows that $\abs{R(\Delta t\lambda)}<1$ for all $\lambda\in \sigma(\bA)\setminus\{0\}$ if $\Delta t<\Delta t_{\text{GeCo2}}\approx 0.3572$, where $\Delta t_{\text{GeCo2}}$ was rounded to five significant figures. Moreover, $\abs{R(\Delta t(-5-\sqrt{3}))}>1$ if $\Delta t>\Delta t_{\text{GeCo2}}$. 

In order to numerically confirm the stability results from Theorem~\ref{Thm:geco2} even in the context of the model problem \eqref{eq:testgeco1}, we solve the initial value problem \eqref{eq:testgeco1} using $\Delta t=\Delta t_{\text{GeCo2}}\cdot(1-10^{-3})\approx0.3569.$ The expected stable behavior of GeCo2 and the convergence of the iterates can be observed in Figures \ref{Fig:geco2a} and \ref{Fig:geco2b}. 
In order to demonstrate the expected divergence of the iterates when $\Delta t>\Delta t_\text{GeCo2}$ even for starting vectors that lie within a small neighborhood of the steady state solution, we choose $\Delta t=\Delta t_{\text{GeCo2}}\cdot(1+10^{-3})\approx0.3576$ and the initial value $\widetilde{\by}^0=\by^*+10^{-5}\cdot(-2,1,1,-1,1)^\intercal$.  In Figure \ref{fig:geco2instab},  a small decrease of the error can observed before it increases to an error of approximately $10^{-3}$. Altogether, the numerical experiments reflect the expected behavior independent of $\by^0$, at least for the selected model problem.
 \begin{figure}
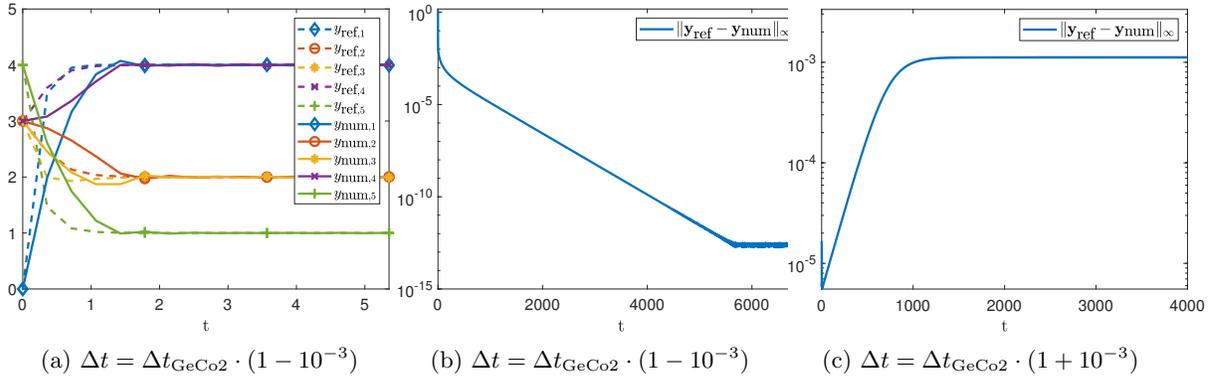

	 \begin{subfigure}[t]{0.33\textwidth}
		 \includegraphics[height=0.2\textheight]{linmod5x5_GeCo2-eps-converted-to.pdf}
		 \subcaption{$\Delta t=\Delta t_{\text{GeCo2}}\cdot(1-10^{-3})$} \label{Fig:geco2a}
		 \end{subfigure}
	 \begin{subfigure}[t]{0.33\textwidth}%
		 \includegraphics[height=0.2\textheight]{linmod5x5_err_GeCo2-eps-converted-to.pdf}
		 \subcaption{$\Delta t=\Delta t_{\text{GeCo2}}\cdot(1-10^{-3})$}\label{Fig:geco2b}
		 \end{subfigure}
	 \begin{subfigure}[t]{0.33\textwidth}
		 \includegraphics[height=0.197\textheight]{linmod5x5_err_instabGeCo2-eps-converted-to.pdf}
		 \subcaption{$\Delta t=\Delta t_{\text{GeCo2}}\cdot(1+10^{-3})$ }\label{fig:geco2instab}
		 \end{subfigure}
	 \caption{Numerical approximation of \eqref{eq:testgeco1} and error plots using GeCo2. In (c) the starting vector $\widetilde{\by}^0$ was used.}
	 \label{Fig:geco2}
	 \end{figure}
 
\subsubsection{Investigation of BBKS schemes}
The stability functions of BBKS1 and BBKS2($1$) in the context of \eqref{PDS_test} are given by Theorem \ref{Thm:gBBKS1} and Theorem \ref{Thm:gBBKS2}, respectively. We apply the schemes to the initial value problem \eqref{eq:testgeco1} and test the stability for specific time step sizes. An elementary calculation reveals that the stability functions for both schemes satisfy $\abs{R(\Delta t\lambda)}<1$ for all $\lambda\in \sigma(\bA)\setminus\{0\}$ if $\Delta t < \Delta t_{\text{BBKS}}=\frac{5-\sqrt3}{11},$ and $\abs{R(\Delta t(-5-\sqrt3))}>1$  if $\Delta t>\Delta t_{\text{BBKS}}$. As we did for GeCo2, we investigate the BBKS schemes by varying the time step size around $\Delta t_{\text{BBKS}}$ by multiplying with $1\pm 10^{-3}$, respectively. Furthermore, we also choose $\widetilde{\by}^0=\by^*+10^{-5}\cdot(-2,1,1,-1,1)^\intercal$ in the case $\Delta t>\Delta t_{\text{BBKS}}$ in order to highlight the expected divergence of the iterates.
 
In Figure \ref{Fig:mBBKS1} the numerical solutions of \eqref{eq:testgeco1}  and the error plots using BBKS1 are shown. In \ref{fig:mBBKS1a}, corresponding to the step size $\Delta t=\Delta t_{\text{BBKS}}\cdot(1-10^{-3})\approx 0.2968$, all components of the numerical solution tend to the reference solution in the long run, with an error between $10^{-13}$ and $10^{-12}$.  In the unstable case, see Figure \ref{fig:mBBKS1c}, when $\Delta t =\Delta t_{\text{BBKS}}\cdot(1+10^{-3})\approx 0.2974$, the error increases almost to $10^{-3}$. 
Similar conclusions can be deduced by looking at Figure  \ref{Fig:mBBKS2}, where the numerical solutions and the error plots of BBKS2($1$) are shown, in correspondence of the same step sizes used for BBKS1.

Altogether, the stability properties shown in Figures \ref{Fig:mBBKS1} and \ref{Fig:mBBKS2} are in accordance with the stability results expected from the theory presented in Section \ref{sec:BBKS}.
\begin{figure}[!h]
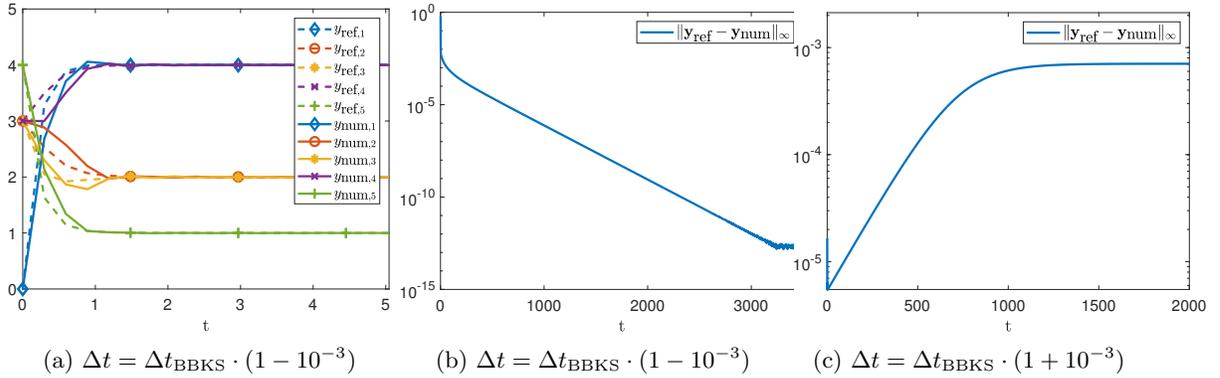

	\begin{subfigure}[t]{0.33\textwidth}
			\includegraphics[height=0.2\textheight]{linmod5x5_BBKS1-eps-converted-to.pdf}
			\subcaption{$\Delta t=\Delta t_{\text{BBKS}}\cdot(1-10^{-3})$}\label{fig:mBBKS1a}
		\end{subfigure}
	\begin{subfigure}[t]{0.33\textwidth}%
			\includegraphics[height=0.2\textheight]{linmod5x5_err_BBKS1-eps-converted-to.pdf}
			\subcaption{$\Delta t=\Delta t_{\text{BBKS}}\cdot(1-10^{-3})$}\label{fig:mBBKS1b}
		\end{subfigure}
	\begin{subfigure}[t]{0.3\textwidth}%
			\includegraphics[height=0.195\textheight]{linmod5x5_err_instabBBKS1-eps-converted-to.pdf}
			\subcaption{$\Delta t=\Delta t_{\text{BBKS}}\cdot(1+10^{-3})$}\label{fig:mBBKS1c}
		\end{subfigure}
	\caption{Numerical approximations of \eqref{eq:testgeco1} and error plots using BBKS1. The starting vector $\widetilde{\by}^0$ was chosen in (c).}
	\label{Fig:mBBKS1}
\end{figure}
\begin{figure}[!h]
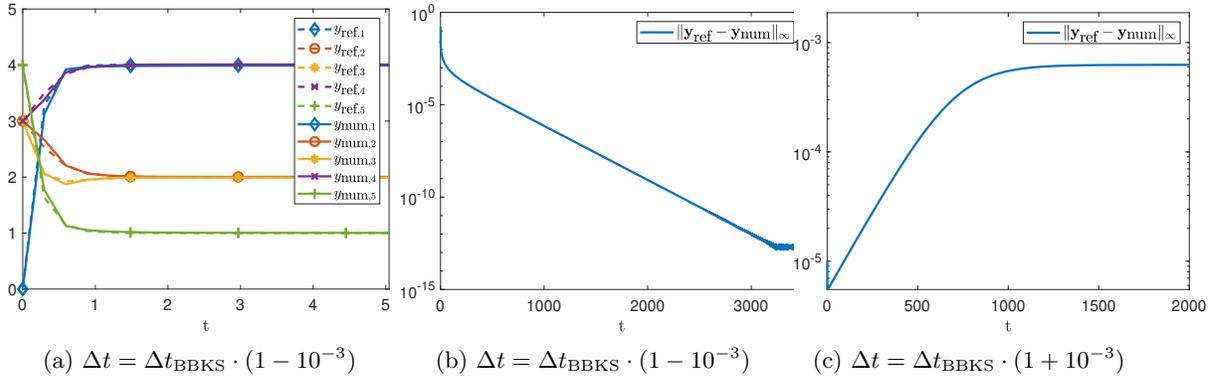

	\begin{subfigure}[t]{0.33\textwidth}
		\includegraphics[height=0.2\textheight]{linmod5x5_BBKS21-eps-converted-to.pdf}
		\subcaption{$\Delta t=\Delta t_{\text{BBKS}}\cdot(1-10^{-3})$}\label{fig:mBBKS2a}
	\end{subfigure}
	\begin{subfigure}[t]{0.33\textwidth}%
		\includegraphics[height=0.2\textheight]{linmod5x5_err_BBKS21-eps-converted-to.pdf}
		\subcaption{$\Delta t=\Delta t_{\text{BBKS}}\cdot(1-10^{-3})$}\label{fig:mBBKS2b}
	\end{subfigure}
	\begin{subfigure}[t]{0.3\textwidth}%
		\includegraphics[height=0.195\textheight]{linmod5x5_err_instabBBKS21-eps-converted-to.pdf}
		\subcaption{$\Delta t=\Delta t_{\text{BBKS}}\cdot(1+10^{-3})$}\label{fig:mBBKS2c}
	\end{subfigure}
	\caption{Numerical approximations of \eqref{eq:testgeco1} and error plots using BBKS2(1). In (c) the starting vector $\widetilde{\by}^0$ was chosen.}
	\label{Fig:mBBKS2}
\end{figure}

}

\suggest{\subsection{Applicability of GeCo1 to stiff problems}
Since the GeCo1 scheme is stable for arbitrary time step sizes, at least locally, this scheme might be able to solve stiff problems.
Unfortunately, this is not true, as we will show next.

To assess the usability for stiff problems, we consider the linear inital value problem $\by'=\bA\by$, $\by(0)=\by^0$ with
\begin{equation}\label{eq:linsys_3x3}
	\bA=\begin{pmatrix}-K & 0 & 0\\\hphantom{-}K & -1 & 0\\0 & \hphantom{-}1& 0\end{pmatrix},\quad \by^0=\begin{pmatrix}0.98\\ 0.01\\ 0.01\end{pmatrix}.
\end{equation}
The solutions of this problem satisfy $y_1(t)+y_2(t)+y_3(t)=1$ for all times $t$ and the system becomes increasingly stiff as the value of $K>0$ is increased. For $K\ne 1$ the solution is
\[y_1(t)=\frac{49e^{-K t}}{50},\quad y_2(t)=\frac{(99K-1)e^{-t}}{100(K-1)}-\frac{49Ke^{-K t}}{50(K-1)},\quad y_3(t)=1 -\frac{(99K-1)e^{-t}}{100(K-1)} +\frac{49e^{-K t}}{50(K-1)}.\]
We see that $y_1$ monotonically tends to $0$, while $y_2$ initially increases before tending to $0$ and $y_3$ monotonically increases  to $1$. 
Defining $\hat\by(t)=\lim_{K\to\infty}\by(t)$ we find
\[\hat y_1(t)=0,\quad \hat y_2(t)=\frac{99}{100}e^{-t},\quad \hat  y_3(t)=1-\frac{99}{100}e^{-t}\]
for $t>0$.
In the limit $K\to\infty$, $y_2$ and $y_3$ should therefore be equal at approximately $t=0.7$. Reference solutions of the problem for different values of $K$ are indicated by the dashed lines in Figure~\ref{fig:linsys_3x3}.
\begin{figure}
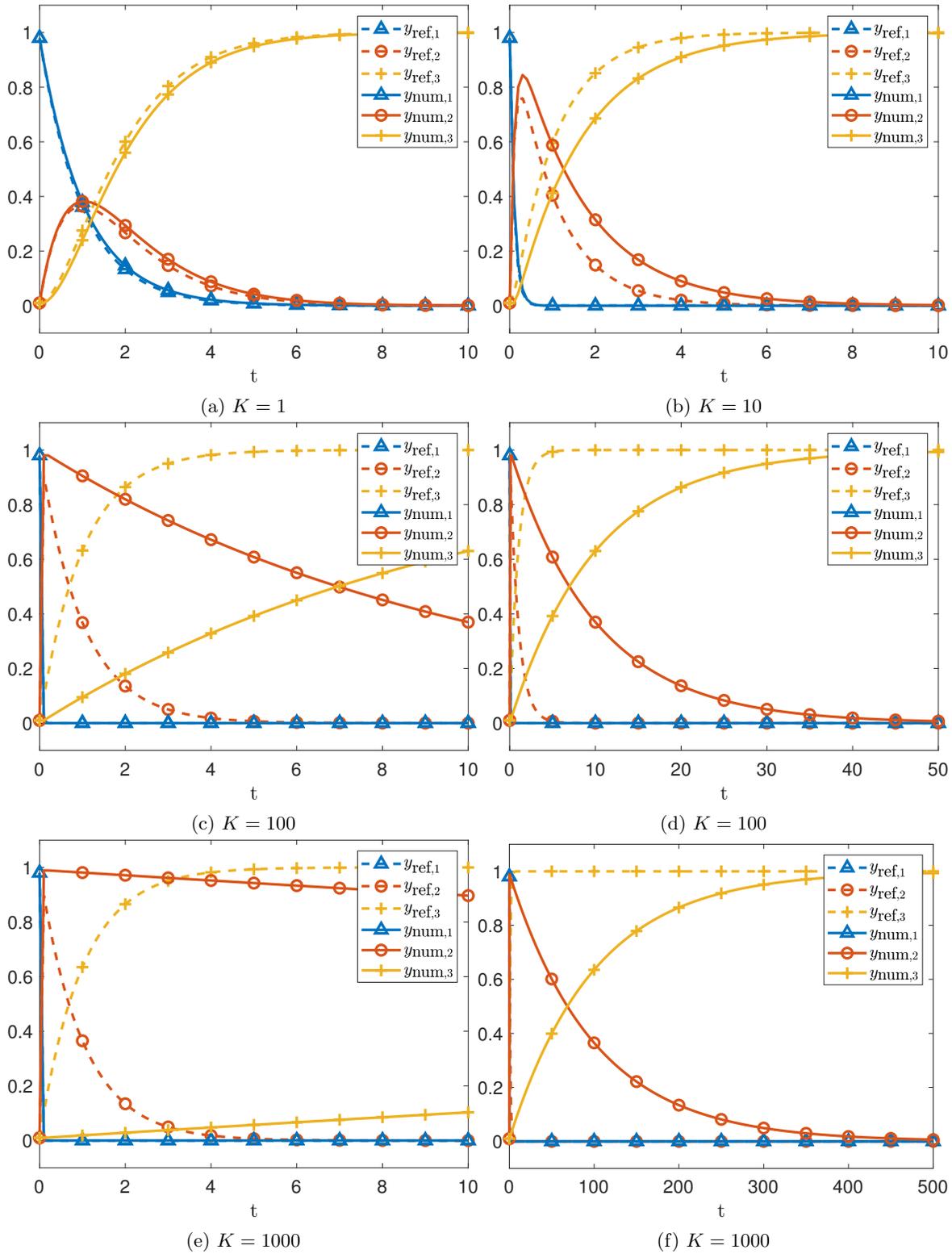

	\begin{subfigure}[t]{0.5\textwidth}
		\includegraphics[width=\textwidth]{geco1_1_1-eps-converted-to.pdf}
		\subcaption{$K=1$}\label{subfig:geco1_1_1}
	\end{subfigure}%
	\begin{subfigure}[t]{0.5\textwidth}%
		\includegraphics[width=\textwidth]{geco1_10_1-eps-converted-to.pdf}
		\subcaption{$K=10$}\label{subfig:geco1_10_1}
	\end{subfigure}
	
	\begin{subfigure}[t]{0.5\textwidth}
		\includegraphics[width=\textwidth]{geco1_100_1-eps-converted-to.pdf}
		\subcaption{$K=100$}\label{subfig:geco1_100_1}
	\end{subfigure}%
	\begin{subfigure}[t]{0.5\textwidth}%
		\includegraphics[width=\textwidth]{geco1_100_1b-eps-converted-to.pdf}
		\subcaption{$K=100$}\label{subfig:geco1_100_1b}
	\end{subfigure}   
	
	\begin{subfigure}[t]{0.5\textwidth}
		\includegraphics[width=\textwidth]{geco1_1000_1-eps-converted-to.pdf}
		\subcaption{$K=1000$}\label{subfig:geco1_1000_1}
	\end{subfigure}%
	\begin{subfigure}[t]{0.5\textwidth}%
		\includegraphics[width=\textwidth]{geco1_1000_1b-eps-converted-to.pdf}
		\subcaption{$K=1000$}\label{subfig:geco1_1000_1b}
	\end{subfigure}   
	\caption{Numerical solutions of \eqref{eq:linsys_3x3} computed with GeCo1 for different values of $K$. The step size used is $\Delta t=0.1$. The dashed lines indicate the reference solution.}
	\label{fig:linsys_3x3}
\end{figure}

Figure~\ref{fig:linsys_3x3} also shows numerical solutions of \eqref{eq:linsys_3x3} computed with GeCo1 for different values of $K$ and $\Delta t=0.1$. 
We observe that the decrease of $y_1$ is captured quite well independent of the value of $K$, but an increase of $K$ tremendously decreases the accuracy with respect to $y_2$ and $y_3$.
While the general behavior is well captured, a significant phase error is introduced with respect to $y_2$ and $y_3$ that shifts the numerical approximations in such a way that $y_2$ and $y_3$ are equal at about $t=7$ for $K=10$ and about $t=70$ for $K=100$, which is far from $t=0.7$.
Numerical experiments show that to obtain accurate solutions, the step size must be decreased by the same order of magnitude by which $K$ is increased. Hence, the GeCo1 scheme can hardly be regarded as a stiff solver.

Nevertheless, the results shown in Figure~\ref{fig:linsys_3x3} are in accordance with Theorem~\ref{Thm:GeCo1}. We find that the iterates converge to the correct steady state solution, even though a great amount of steps is required. Thus, the statements of the Theorem~\ref{Thm:GeCo1} seem to be valid not only in a small neighborhood of the steady state, at least for this problem.}

\section{Summary and Conclusion}\label{sec:Summary}
In this paper we investigated the stability of gBBKS and GeCo schemes, which preserve all linear invariants of the underlying problem while producing positive approximations for any time step size. These schemes belong to the class of nonstandard methods for which Theorem \ref{Thm_MPRK_stabil} provides a criterion to conclude the Lyapunov stability of a non-hyperbolic fixed point. We found that the gBBKS schemes preserve the stability domain of the underlying Runge--Kutta method while being additionally unconditionally positive. For GeCo2 the investigation revealed that the stability domain is larger than the one of the underlying Heun scheme. Furthermore, GeCo1 possesses stable fixed points for any $\Delta t>0$ and any finite sized linear test problem \eqref{eq:PDS_Sys}, which means that GeCo1 is unconditionally stable while the underlying explicit Euler method is only conditionally stable. 
All theoretical aspects are well reflected in the numerical experiments.

\appendix

\section{Appendix}

\suggest{In this appendix, we present results with rather technical proofs. The first statement provides us conditions under which the product of a scalar continuous function and a partially differentiable vector field is partially differentiable again, and conditions under which a partial derivative of the product does not exist. }
\begin{lem}\label{Lem:diff}
	Let $D\tm \R^N$ be open and $\be_i$ denote the $i$-th unit vector in $\R^N$.  \suggest{Furthermore, let}  
	$\mathbf \Phi\from D\to\R^N$ \suggest{be} partially differentiable in $\mathbf x_0\in D$ with $\mathbf \Phi(\mathbf x_0)=\mathbf 0$ and \suggest{let} $\Psi\from D\to\R$.
	\begin{enumerate}
		\item\label{it:alemdiff}  If $\Psi$ is continuous in $\mathbf x_0$, then the product $\Psi \cdot\mathbf \Phi\from D\to\R^N$ is partially differentiable in $\mathbf x_0$ with
		\[\mathbf D(\Psi\cdot\mathbf \Phi)(\mathbf x_0)=\Psi(\mathbf x_0)\mathbf D\mathbf \Phi(\mathbf x_0).\]
		\item\label{it:blemdiff} If $\Psi(\bx_0+\be_ih)$ possesses several accumulation points as $h\to0$ and $\partial_i\mathbf \Phi(\bx_0)\neq\bzero$, then the $i$-th partial derivative of $\Psi\cdot \mathbf \Phi$ does not exists.
	\end{enumerate}
	
\end{lem}
\begin{proof}
	\begin{enumerate}
		\item Since $\mathbf \Phi(\mathbf x_0)=\mathbf 0$ we have
		\begin{equation}\label{eq:proofLemdiff}
			\begin{aligned}\frac{\Psi(\mathbf x_0 + h\mathbf e_i)\mathbf \Phi(\mathbf x_0 + h\mathbf e_i)-\Psi(\mathbf x_0)\mathbf \Phi(\mathbf x_0)}{h}&=\frac{\Psi(\mathbf x_0 + h\mathbf e_i)\mathbf \Phi(\mathbf x_0 + h\mathbf e_i)-\Psi(\mathbf x_0+h\mathbf e_i)\mathbf \Phi(\mathbf x_0)}{h}\\
				&=\Psi(\mathbf x_0 + h\mathbf e_i)\cdot \frac{\mathbf \Phi(\mathbf x_0 + h\mathbf e_i)-\mathbf \Phi(\mathbf x_0)}{h}.
			\end{aligned}
		\end{equation}
		Passing to the limit $h\to 0$ on both sides shows
		\[\frac{\partial(\Psi\mathbf \Phi)}{\partial x_i}(\mathbf x_0)=\Psi(\mathbf x_0)\frac{\partial \mathbf\Phi}{\partial x_i}(\mathbf x_0),\quad i=1,\dots,N,\]
		and hence
		\[\mathbf D(\Psi\mathbf \Phi)(\mathbf x_0)=\Psi(\mathbf x_0)\mathbf D\mathbf \Phi(\mathbf x_0).\]
		\item If $\Psi(\bx_0+\be_ih)$ possesses several accumulation points as $h\to0$, then this is also true for 
		\[\Psi(\mathbf x_0 + h\mathbf e_i)\cdot \frac{\mathbf \Phi(\mathbf x_0 + h\mathbf e_i)-\mathbf \Phi(\mathbf x_0)}{h}\]
		as $\frac{\partial \mathbf \Phi}{\partial x_i}(\bx_0)\neq\bzero$. As a result of \eqref{eq:proofLemdiff} we thus obtain that $\frac{\partial(\Psi\cdot \mathbf \Phi)}{\partial x_i}(\bx_0)$ does not exist.
	\end{enumerate}
\end{proof}
\suggest{The second and last result of this section is concerned with sufficient conditions for a map $T\colon \R^2_{>0}\to \R$ to be locally Lipschitz continuous even though it is not in $\mathcal C^1$ on its entire domain.}
\begin{lem}\label{Lem:locallyLip}
	Let $\bA\in \R^{2\times 2}$ be given by  \eqref{PDS_test} and define $D_1=\{\bx\in \R^2_{>0}\mid x_1>\frac{b}{a}x_2\}$, $D_2=\{\bx\in \R^2_{>0}\mid x_1<\frac{b}{a}x_2\}$ and $C=\ker(\bA)\cap \R^2_{>0}$. Let $T:\R^2_{>0}\to \R$ be continuous with $T\vert_C=\operatorname{const}$ and $T\vert_{D_i}\in \mathcal C^1$ for $i=1,2$. If $\lim_{\bx\to\bc}\nabla T(\bx)$ exists for any $\bc\in C$, then $T$ is locally Lipschitz continuous.
\end{lem}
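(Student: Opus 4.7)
The target is to show that $T$ is locally Lipschitz at every point of $\R^2_{>0}$. Since $D_1$ and $D_2$ are open and $T|_{D_i}\in\mathcal C^1$, the function $T$ is automatically locally Lipschitz at every point of $D_1\cup D_2$. Hence the plan reduces to fixing an arbitrary $\bc\in C$ and exhibiting a neighborhood $U$ of $\bc$ on which $T$ is Lipschitz.

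Two structural observations are central. First, $D_1$ and $D_2$ are convex, because each is the intersection of an open half-plane with $\R^2_{>0}$; in particular $D_1\cap U$ and $D_2\cap U$ remain convex for any open ball $U$ around $\bc$. Second, by the hypothesis that $\lim_{\bx\to\bc'}\nabla T(\bx)$ exists for every $\bc'\in C$, and since $\nabla T$ is continuous on each open set $D_i$, the restriction $\nabla T|_{D_i}$ admits a continuous extension to $\overline{D_i}\cap U$; by shrinking $U$ around $\bc$ if necessary I can bound this extension in norm by a common constant $L>0$ on $(\overline{D_1}\cup\overline{D_2})\cap U$.

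Next I estimate $|T(\bx)-T(\by)|$ along line segments. For $\bx\in D_1\cap U$ and $\bc_0\in C\cap U$, the segment $[\bx,\bc_0]$ lies in $\overline{D_1}\cap U$ by convexity, and the interior point $\bc_\varepsilon=(1-\varepsilon)\bc_0+\varepsilon\bx$ lies in $D_1\cap U$ for every $\varepsilon\in(0,1]$. Applying the classical mean value theorem to $T\in\mathcal C^1(D_1)$ on $[\bx,\bc_\varepsilon]\subset D_1$ yields $|T(\bx)-T(\bc_\varepsilon)|\leq L\|\bx-\bc_\varepsilon\|$, and continuity of $T$ at $\bc_0$ lets me pass to the limit $\varepsilon\to 0^+$ to obtain $|T(\bx)-T(\bc_0)|\leq L\|\bx-\bc_0\|$. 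The analogous bound holds on $\overline{D_2}\cap U$. For general $\bx,\by\in U$, the segment $[\bx,\by]$ either stays in one $\overline{D_i}\cap U$, or lies entirely in $C$ (in which case $T$ is constant and the estimate is trivial), or crosses $C$ at a unique point $\bc_0$ splitting it into two sub-segments lying in $\overline{D_1}\cap U$ and $\overline{D_2}\cap U$, respectively. Applying the previous bound to each piece gives $|T(\bx)-T(\by)|\leq L\|\bx-\by\|$, since the two sub-segment lengths sum to $\|\bx-\by\|$.

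The only step requiring genuine care is the continuous extension of $\nabla T|_{D_i}$ up to the interface $C$: this, together with the convexity of each $D_i$, is what reduces the global assertion on $T$ to two applications of the mean value theorem on convex open sets. The hypothesized existence of the one-sided limits of $\nabla T$ at each $\bc'\in C$ provides exactly that extension, and the assumption $T|_C=\operatorname{const}$ ensures consistency on the interface when the segment degenerates into $C$. Hence no further delicate gluing of the two sides is needed, and the argument goes through cleanly.
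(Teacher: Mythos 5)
Your proof is correct and follows essentially the same route as the paper's: both arguments extend $\nabla T$ continuously to $\overline{D_i}$ near a point of $C$, apply the mean value theorem on each region to get a one-sided Lipschitz bound up to the interface (you via the interior points $\bc_\varepsilon$ on the segment, the paper via a sequence in $D_i$ converging to the boundary point), and then handle points on opposite sides of $C$ by splitting the connecting segment at its unique intersection with $C$ and using that the two sub-segment lengths sum to $\norm{\bx-\by}$. The only cosmetic difference is that you invoke convexity of $D_1$, $D_2$ directly where the paper works with closed half balls and writes out the intersection point $\bz\in C$ explicitly.
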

\begin{proof}
	
	Note that $C=\partial{D_1}= \partial{D_2}$ and that $T$ is locally Lipschitz on $D_1$ and $D_2$ because $T\vert_{D_i}\in \mathcal C^1$ for $i=1,2$. As a first step, we prove that $T$ is also locally Lipschitz on $\overline{D_i}=D_i\cup C$. For this, we consider closed half balls $H_{\epsilon,i}(\bv)=\overline{B_\epsilon(\bv)}\cap\overline{D_i}$, where $\bv\in C$ and $B_\epsilon(\bv)$ denotes the open ball with center $\bv$ and radius $\epsilon>0$.
	
	As the limit $\lim_{\bx\to\bc}\nabla T(\bx)$ exists for any $\bc\in C$, we can consider the continuous extension of $\nabla T$ to the set $H_{\epsilon,i}(\bv)$, denoted by $\widetilde\bT$. Thus, the mean value theorem and the Cauchy--Schwarz inequality yield
	\begin{equation}\label{eq:HLipschitz}
		\lvert T(\bx_1)-T(\bx_2)\rvert\leq \sup_{\bx\in\overline{B_\epsilon(\bv)}\cap D_i}\Vert \nabla T(\bx)\Vert_2 \Vert \bx_1-\bx_2\Vert_2=\max_{\bx\in H_{\epsilon,i}(\bv)}\Vert \widetilde\bT(\bx)\Vert_2 \Vert \bx_1-\bx_2\Vert_2
	\end{equation}
	for $\bx_1,\bx_2\in \overline{B_\epsilon(\bv)}\cap D_i$, which means that $T$ is Lipschitz continuous on $\overline{B_\epsilon(\bv)}\cap D_i$ for $i\in\{1,2\}$.
	
	Note that $T\vert_C=\operatorname{const}$ implies that $T$ is Lipschitz continuous on $C$. Hence, to prove the Lipschitz continuity on the closed half ball $H_{\epsilon,i}(\bv)$ it remains to consider the case $\bx_1\in C$ and $\bx_2\in \overline{B_\epsilon(\bv)}\cap D_i$ with $i\in \{1,2\}$. For this, we introduce a sequence $(\bx^n)_{n\in \N}\tm \overline{B_\epsilon(\bv)}\cap D_i$ with $\lim_{n\to\infty}\bx^n=\bx_1$. As $T$ is continuous we therefore find $N_0\in \N$ such that for all $n\geq N_0$ we have 
	\begin{equation}\label{eq:H(x^n)-H(x1)<1/n}
		\lvert T(\bx_1)-T(\bx^n)\rvert<\frac{1}{n}.
	\end{equation}
	Altogether, using $L_i=\max_{\bx\in H_{\epsilon,i}(\bv)}\Vert \widetilde\bT(\bx)\Vert_2$ we obtain from \eqref{eq:HLipschitz} and \eqref{eq:H(x^n)-H(x1)<1/n}
	\begin{equation*}
		\lvert T(\bx_1)-T(\bx_2)\rvert\leq \lvert T(\bx_1)-T(\bx^n)\rvert+\lvert T(\bx^n)-T(\bx_2)\rvert< \frac{1}{n}+L_i\Vert \bx^n-\bx_2\Vert_2,
	\end{equation*}
	and passing to the limit, we see that $T$ is even Lipschitz continuous on the closed half ball with a Lipschitz constant $L_i$.
	
	Next, we prove that for any $\bx\in D_1$ and $\by\in D_2$ there exists a $\bz\in C$ such that 
	\begin{equation}
		\Vert \bx-\by\Vert_2=\Vert \bx-\bz\Vert_2+\Vert \bz-\by\Vert_2.\label{eq:triang.equal}
	\end{equation}
	That is to say that  $\bz$ lies on the straight line between $\bx$ and $\by$. Indeed, setting 
	\[ \bz=\bx+c(\by-\bx),\quad c=\frac{x_1-\frac{b}{a}x_2}{x_1-\frac{b}{a}x_2+\frac{b}{a}y_2-y_1},\]
	we find $c\in(0,1)$ as $\bx\in D_1$ and $\by\in D_2$. Additionally, $\bz\in \ker(\bA)$ since
	\[z_1-\frac{b}{a}z_2=x_1+c(y_1-x_1)-\frac{b}{a}(x_2+c(y_2-x_2))=x_1-\frac{b}{a}x_2-c\left(x_1-y_1+\frac{b}{a}y_2-\frac{b}{a}x_2\right)=0, \]
	and $\bz>\bzero$ since it is on the line between $\bx>\bzero$ and $\by>\bzero$.
	
	Let us now prove that $T$ is Lipschitz continuous on $\overline{B_\epsilon(\bv)}$. For this, let $\bx\in D_1$ and $\by\in D_2$, then choose $\bz\in C$ such that \eqref{eq:triang.equal} is satisfied. As a result we obtain
	\begin{equation*}
		\begin{aligned}
			\lvert T(\bx)-T(\by)\rvert&\leq \lvert T(\bx)-T(\bz)\rvert+\lvert T(\bz)-T(\by)\rvert\leq\max\{L_1,L_2\} (\Vert \bx-\bz\Vert_2+\Vert \bz-\by\Vert_2)\\&=\max\{L_1,L_2\}\Vert \bx-\by\Vert_2,
		\end{aligned}
	\end{equation*}
	and since $\bv\in C$ and $\epsilon>0$ are arbitrary, we have proven hat $T$ is locally Lipschitz continuous.
\end{proof}

\section*{Acknowledgements}
The author T.\ Izgin gratefully acknowledges the financial support by the Deutsche Forschungsgemeinschaft (DFG) through the grant ME 1889/10-1. Moreover, Thomas Izgin thanks Stefan Dingel for many fruitful discussions. A. Martiradonna has been partially supported by IndAM-GNCS Project through the grant CUP\_E55F22000270001.

 \bibliographystyle{elsarticle-num} 
 \bibliography{cas-refs}

\end{document}
\endinput